\newcommand{\of}[2]{\ensuremath{#1\!\left(#2\right)}}
\newcommand{\subof}[3]{\of{#1_{#2}}{#3}}
\newcommand{\bmult}{\ensuremath{\circledast}}
\newcommand{\bprod}[2]{\ensuremath{#1\bmult #2}}
\newcommand{\invbar}[1]{\ensuremath{\bar{#1}}}
\newcommand{\ldiv}[2]{\ensuremath{#1\backslash #2}}
\newcommand{\mir}[1]{\ensuremath{#1^{\star}}}
\newcommand{\pr}{\ensuremath{^{\prime}}}
\newcommand{\setbld}[2]{\ensuremath{\left\{#1\,\vert\, #2\right\}}}
\newcommand{\single}[1]{\ensuremath{ \left\{ #1 \right\} }}
\newcommand{\nlist}[1]{\ensuremath{\single{1,2,\dots,#1}}}
\renewcommand{\sharp}{\ensuremath{\#}}
\newcommand{\ap}{\ensuremath{a\pr}}
\newcommand{\as}[1]{\ensuremath{a_{#1}}}
\newcommand{\As}[1]{\ensuremath{A_{#1}}}
\newcommand{\bp}{\ensuremath{b\pr}}
\newcommand{\bpi}{\ensuremath{\invbar{\pi}}}
\newcommand{\bpis}[1]{\ensuremath{{\bpi}_{#1}}}
\newcommand{\Br}[1]{\ensuremath{\mathfrak{B}_{#1}}}
\newcommand{\Brp}[1]{\ensuremath{\mathfrak{B}_{#1}^{+}}}
\newcommand{\bs}[1]{\ensuremath{b_{#1}}}
\newcommand{\Bs}[1]{\ensuremath{B_{#1}}}
\newcommand{\Cross}[1]{\ensuremath{\mathcal{C}(#1)}}
\newcommand{\deffont}[1]{\emph{\textbf{#1}}}
\newcommand{\eqdef}{\ensuremath{:=}}
\newcommand{\Gs}[1]{\ensuremath{G_{#1}}}
\newcommand{\ks}[1]{\ensuremath{k_{#1}}}
\newcommand{\Ks}[1]{\ensuremath{K_{#1}}}
\newcommand{\map}[3]{\mbox{\ensuremath{#1\!\colon\!%
	#2\negthinspace\rightarrow\negthinspace#3}}}
\newcommand{\OR}[1]{\ensuremath{\of{OR}{#1}}}
\newcommand{\pbraid}[1]{\ensuremath{#1^{+}}}
\newcommand{\pis}[1]{\ensuremath{\pi_{#1}}}
\newcommand{\piof}[1]{\of{\pi}{#1}}
\newcommand{\pisof}[2]{\ensuremath{\of{\pis{#1}}{#2}}}
\newcommand{\ps}[1]{\ensuremath{p_{#1}}}
\newcommand{\psof}[2]{\ensuremath{\subof{p}{#1}{#2}}}
\newcommand{\Pur}[1]{\ensuremath{\mathfrak{P}_{#1}}}
\newcommand{\Purp}[1]{\ensuremath{\mathfrak{P}^{+}_{#1}}}
\newcommand{\qs}[1]{\ensuremath{q_{#1}}}
\newcommand{\resp}[1]{(\textit{resp}. #1)}
\newcommand{\rs}[1]{\ensuremath{r_{#1}}}
\newcommand{\Rs}[1]{\ensuremath{R_{#1}}}
\newcommand{\sigof}[1]{\ensuremath{\sigma\left(#1\right)}}
\newcommand{\SR}[1]{\ensuremath{\mathcal{SR}_{#1}}}
\newcommand{\SRp}[1]{\ensuremath{\mathcal{SR}^{+}_{#1}}}
\newcommand{\SRT}[1]{\of{SRT}{#1}}
\newcommand{\ssub}[1]{\ensuremath{s_{#1}}}
\newcommand{\Ssub}[1]{\ensuremath{S_{#1}}}
\newcommand{\SymMat}[1]{\ensuremath{\mathfrak{S}_{#1}^{0}[\mathbf{Z}]}}
\newcommand{\SymMatp}[1]{\ensuremath{\mathfrak{S}_{#1}^{0}\lbrack\mathbf{Z}^{+}\rbrack}}%
\newcommand{\SymGp}[1]{\ensuremath{\Sigma_{#1}}}
\newcommand{\taus}[1]{\ensuremath{\tau_{#1}}}
\newcommand{\ts}[1]{\ensuremath{t_{#1}}}
\newcommand{\Vs}[1]{\ensuremath{V_{#1}}}
\newcommand{\xs}[1]{\ensuremath{x_{#1}}}
\newcommand{\subind}{\ensuremath{\mathcal{I}}}
\newcommand{\AI}{\ensuremath{\As{\subind}}}
\newcommand{\piI}{\ensuremath{\pis{\subind}}}
\newcommand{\sqsize}[1]{\ensuremath{#1\times #1}}
\newtheorem{theorem}{Theorem}
\newtheorem{lemma}[theorem]{Lemma}%
\newtheorem{definition}[theorem]{Definition}%
\newtheorem{remark}[theorem]{Remark}%
\newtheorem{prop}[theorem]{Proposition}%
\newtheorem{conjecture}[theorem]{Conjecture}
\newcommand{\labbox}[1]{\colorbox[gray]{.80}{#1}}
\newcommand{\fourTableau}[6]{
	\begin{array}{cccc}
		\hline
		\multicolumn{1}{|c}{\labbox{1}} & 
		\multicolumn{1}{|c}{#1} & 
		\multicolumn{1}{|c}{#2} & 
		\multicolumn{1}{|c|}{#3} 
		\\\hline 
	& 
		\multicolumn{1}{|c}{\labbox{2}} & 
		\multicolumn{1}{|c}{#4} & 
		\multicolumn{1}{|c|}{#5} 
		\\\cline{2-4}
	& & 
		\multicolumn{1}{|c}{\labbox{3}} & 
		\multicolumn{1}{|c|}{#6} 
		\\\cline{3-4}
	& & & 
		\multicolumn{1}{|c|}{\labbox{4}} 
		\\\cline{4-4}
	\end{array}%
	}
\newcommand{\threeConfig}[6]{
	\begin{array}{ccc}
		\hline
		\multicolumn{1}{|c}{\labbox{#1}} & 
		\multicolumn{1}{|c}{#4} & 
		\multicolumn{1}{|c|}{#5} 
		\\\hline 
	& 
		\multicolumn{1}{|c}{\labbox{#2}} & 
		\multicolumn{1}{|c|}{#6} 
		\\\cline{2-3}
	&&
		\multicolumn{1}{|c|}{\labbox{#3}} 
		\\\cline{3-3}
	\end{array}%
	}
\begin{document}

\title{Crossing matrices of positive braids}
\author{Mauricio Gutierrez}
	\address{}
	\email{magutierrez002@gmail.com}
\author{ Zbigniew Nitecki}
	\address{Department of Mathematics\\Tufts University\\503 Boston Avenue\\Medford, MA 02155}
	\email{znitecki@tufts.edu}
\date{\today}                                           
\begin{abstract}
	The crossing matrix of a braid on $N$ strands is the $N\times N$ integer matrix with zero diagonal 
	whose $i,j$ entry is the algebraic number (positive minus negative) of crossings by strand $i$ over strand $j$ . 
	When restricted to the subgroup of pure braids, this defines a homomorphism onto the additive subgroup 
	of $N\times N$ symmetric integer matrices with zero diagonal--in fact, it represents the abelianization of this subgroup.
	As a function on the whole $N$-braid group, it is a derivation defined by the action of the symmetric group on square 
	matrices.  The set of all crossing matrices can be described using the natural decomposition of any braid as the product
	of a pure braid with a ``permutation braid'' in the sense of Thurston, but the subset of crossing matrices for positive
	braids is harder to describe.  We formulate a finite algorithm which exhibits all positive braids with a given 
	crossing matrix, if any exist, or declares that there are none.
\end{abstract}

\subjclass[2010]{Primary 57M25, 20F36; Secondary 05B20, 57M04}

\keywords{geometric braid, permutation braid, positive braid, crossing matrix, (virtually) totally blocked matrix}

\maketitle

\section{Introduction}
The notion of a crossing matrix was formulated in \cite{BGKN}: for a geometric braid $b$ with $N$ strands, the crossing matrix
\Cross{b} is an \sqsize{N} matrix whose $i,j$ entry is the (algebraic) number of crossings of strand $i$ over strand $j$.
This matrix is invariant under the braid relations on geometric braids and hence is well-defined as a function on the braid 
group  \Br{N}. In section~\ref{sec:CM} we review the basic properties of this function 
and the relatively easy characterization of its range.  
The current paper focuses on the deeper problem of characterizing those matrices that arise as crossing matrices 
of \emph{positive} braids--that is, geometric braids whose crossings all go in the same direction (left over right).  
The reason for asking for such a characterization is that in this case there is no cancellation of a left-over-right crossing with a
right-over-left crossing by the same strands--all crossings ``show up'' in the matrix. 
Examples show that a crossing matrix with all entries non-negative may nonetheless not represent any \emph{positive} braid. 
In this paper, we explain several obstructions to being the crossing matrix of a positive braid, and give an algorithm 
which displays all the positive braids with a given matrix as their crossing matrix (including deciding when there are none).%
\footnote{This algorithm has been successfully implemented in a Mathematica program, which can handle examples up to about size
\sqsize{7} on a MacBook Pro with 8GB of memory.}
We have not been able to come up with a conceptual characterization of which matrices arise as crossing matrices of 
positive braids in general, although we conjecture a characterization for \emph{pure} positive braids, resting on a very specific 
(conjectured) lemma.

\section{Crossing Matrices of Braids}\label{sec:CM}

\subsection{Definition of crossing matrices}\label{subsec:def}  
We think of a \deffont{braid} with $N$ strands as the homotopy class, modulo endpoints, 
of a \deffont{geometric braid}: an ensemble of $N$ 
differentiable paths \psof{i}{t}, $i=1,\dots,N$ in the plane (the \deffont{strands} of the braid) 
whose tangent is never horizontal; the set of (distinct) initial positions 
$\{\psof{i}{0}\ :\ i=1,\dots,N\}$
and the set of final positions $\{\psof{i}{1}\ :\ i=1,\dots,N\}$ differ only in their common vertical coordinate, which we take to be $1$ for the initial
and $0$ for the final points;  it is not required that the initial and final positions of any particular strand be horizontally aligned.
We assume for convenience that these paths are pairwise transverse, so that there are only finitely many crossings between strands,
and each such crossing is assigned \emph{crossing data:} it is regarded as \emph{positive} \resp{\emph{negative}} 
depending on whether it is left-over-right or right-over-left as we move down the path.
%
%

Given a geometric braid $b$, we define its \deffont{crossing matrix} as the \sqsize{N} matrix $\boldsymbol{\Cross{b}}$ 
whose $i,j$ entry is the algebraic crossing number (number of positive crossings minus number of negative crossings)
for strand $i$ crossing \emph{over} strand $j$. By definition, the diagonal entries of \Cross{b} are zero.

It is easy to check that the braid relations (which do modify some crossings) don't affect the crossing matrix. So
the crossing matrix \Cross{b} can be thought of as defined on the braid represented by $b$. 
The description of a geometric braid in terms of crossings of \emph{strands} is a point of view used by Thurston \cite{Thurston},
and contrasts with the point of view of Artin (in terms of \emph{generators and relations}) in his original expositions of the braid group 
\Br{N} \cite{Artin1, Artin2}. 

\subsection{The permutation associated to a braid}\label{subsec:perm}
Attached to each geometric braid is the permutation on the set of horizontal coordinates 
of the starting points of strands which takes (the horizontal coordinate of) the starting point of each strand to its ending point.  

We  use Greek letters to denote permutations, regarded as \deffont{rearrangements}%
--that is, permutations act on 
\emph{positions} rather than \emph{elements}%
: a permutation $\pi\in\SymGp{N}$ 
will be specified by the word%
\footnote{
Permutations will act on words on the right, denoted by superscript.  
} %
 $(12\cdots N)^{\pi}=\pis{1}\pis{2}\cdots\pis{N}$ in the numbers $1,2,\dots,N$ resulting from the action of $\pi$ on the word $12\cdots N$.
This contrasts with regarding a permutation as a bijective mapping \map{\pi}{\nlist{N}}{\nlist{N}} on a set of $N$ elements 
and denoting it  by the $N$-tuple $(\piof{1},\piof{2},\dots,\piof{N})$ of \emph{images} of the individual elements $1,2,\dots,N$ 
under this mapping.
In fact the word $\piof{1}\piof{2}\cdots\piof{N}$ in our ``rearrangement'' notation
denotes the \emph{inverse} permutation%
\footnote{
The inverse of a permutation or braid will be denoted by an overbar: \invbar{\pi}.
}.
For example, the rearrangement $\pi\in\SymGp{4}$ which acts on the positions $1,2,3,4$ via
\begin{equation*}
	\piof{1}=3,\quad\piof{2}=1,\quad\piof{3}=4,\quad\piof{4}=2
\end{equation*}
takes the word $abcd$ to 
\begin{equation*}
	(abcd)^{\pi}=bdac\quad\textit{(e.g., }(1234)^{\pi}=\pis{1}\pis{2}\pis{3}\pis{4}=2413\text{)}
\end{equation*}
while 
\begin{equation*}
	(\piof{1},\piof{2},\piof{3},\piof{4})=(3,1,4,2).
\end{equation*}
%
We extend the ``rearrangement'' action of permutations to matrices: If $A=((\as{ij}))$ is an \sqsize{N} matrix
and $\pi\in\SymGp{N}$, then 
$A^{\pi}$ is the matrix obtained by rearranging the rows as well as the columns of $A$ according to $\pi$.  

When a matrix is the crossing matrix of a braid, say $A=\Cross{a}$, then the permutation \pis{a} associated to $a$ can be read off 
of $A$: every (positive) crossing of strand $i$ \emph{over} another strand moves its position one place to the right, while every (positive)
crossing of another strand over the $i^{th}$ moves it to the left one space.  From this it follows that \pis{a} is defined (\emph{as a mapping
}\map{\pis{a}}{\nlist{N}}{\nlist{N}}) by \emph{adding} to $i$ the sum of the $i^{th}$ \emph{row}  
and \emph{subtracting} the sum of the $i^{th}$ \emph{column} of $A$:%
\footnote{For an arbitrary matrix, this formula does not necessarily define a permutation, only a mapping--which might even map
$\nlist{N}$ to a different set of integers. However, we shall show that in the context we consider it is always a permutation
(Remark~\ref{rmk:perm}).}
\begin{equation}\label{eqn:perm}
	\pisof{A}{i}= i+\sum_{j=1}^{N}A_{ij}-\sum_{k=1}^{N}A_{ki}.
\end{equation}

\subsection{Crossing matrix of a product of braids}\label{subsec:braidprod}
With this notation we can explain the relation between the crossing matrices \Cross{a}, \Cross{b} of two braids and the crossing 
matrix \Cross{ab} of their product in the braid group, which we think of as represented by the geometric braid $a$ followed by the 
geometric braid $b$. The main observation is that the numbering of the strands of $b$ is changed when we premultiply by $a$:
the $i^{th}$strand of $b$ becomes an extension of the strand of $a$ which landed at the $i^{th}$ position--that is, in $ab$ it
continues the $\of{\invbar{\pi}}{i}^{th}$ strand.  From this it follows that a crossing of the $i^{th}$ strand of $b$ over its $j^{th}$
strand appears in $ab$ as a crossing of the $\of{\invbar{\pi}}{i}^{th}$ strand over the $\of{\invbar{\pi}}{j}^{th}$ strand.  With this 
renumbering of strands in $b$, the crossings add, so we have
\begin{prop} For any two braids $a,b\in\Br{N}$,
	\begin{equation*}
		\Cross{ab}=\Cross{a}+\Cross{b}^{\pis{a}}.
	\end{equation*}
\end{prop}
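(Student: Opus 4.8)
The plan is to verify the identity on one convenient geometric representative of $ab$ and then let the strand-bookkeeping described just above the statement do the rest. Since \Cross{\cdot} depends only on the braid (braid relations do not change it) and likewise \pis{a}, \pis{b} depend only on $a$, $b$, I am free to pick geometric representatives: realize $a$ inside the horizontal slab $\{1\ge y\ge\tfrac12\}$ and $b$ inside $\{\tfrac12\ge y\ge 0\}$, matching the $N$ endpoints of $a$ to the $N$ starting points of $b$ along the level $y=\tfrac12$. Rescaled, the upper slab is the geometric braid $a$ and the lower slab is $b$, and the whole picture represents the product $ab$; it suffices to read the three crossing matrices off of it.

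First I would observe that there is \emph{no} crossing ``at the seam'': along $y=\tfrac12$ the $N$ strands sit at $N$ distinct matched horizontal positions, so (by transversality) every crossing of the stacked braid occurs at a level strictly greater than $\tfrac12$ — hence strictly inside the copy of $a$ — or at a level strictly less than $\tfrac12$ — hence strictly inside the copy of $b$. Consequently \Cross{ab} is the sum of the matrix recording the crossings that happen inside $a$ and the matrix recording those that happen inside $b$, and I would treat these two summands separately.

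Next I would identify the two summands. The strands of $ab$ are labelled by their top positions, which are precisely the top positions of $a$; so inside the upper slab the strand labelled $k$ in $ab$ \emph{is} the $k$-th strand of $a$, and a crossing of the $k$-th over the $l$-th strand of $a$ shows up in $ab$ as a crossing of strand $k$ over strand $l$ with the same sign. Thus the ``$a$-part'' of \Cross{ab} is literally \Cross{a}. For the lower slab I would invoke the observation recorded just before the statement: in $ab$ the $i$-th strand of $b$ continues the strand of $a$ that landed at position $i$, i.e.\ the $\of{\bpi}{i}$-th strand, where $\pi=\pis{a}$; hence a positive \resp{negative} crossing of strand $i$ over strand $j$ inside $b$ appears in $ab$ as a positive \resp{negative} crossing of strand $\of{\bpi}{i}$ over strand $\of{\bpi}{j}$. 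Summing over all crossings of $b$, the ``$b$-part'' of \Cross{ab} is the matrix obtained from \Cross{b} by relabelling every row and column index $i$ as $\of{\bpi}{i}$, and comparing this with the definition of the rearrangement action on \sqsize{N} matrices from Section~\ref{subsec:perm} identifies it as $\Cross{b}^{\pis{a}}$. Adding the two parts yields the claimed formula.

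The main obstacle is precisely that last identification: one must match the index-relabelling $i\mapsto\of{\bpi}{i}$ against the ``rearrangement'' conventions of Section~\ref{subsec:perm} carefully enough to be certain it produces $\Cross{b}^{\pis{a}}$ and not its transpose or $\Cross{b}^{\bpi}$ — this is pure bookkeeping, but it is exactly the kind of step where an inverse or a transpose can silently slip in, so I would do it by unwinding the action on words first and then lifting it to rows and columns. Everything else — the absence of a seam crossing, additivity of crossings within each slab, and invariance under the braid relations — is routine.
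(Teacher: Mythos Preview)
Your argument is correct and is exactly the paper's own argument, just written out in more detail: stack geometric representatives, note that crossings split cleanly between the two slabs, observe that the upper slab contributes \Cross{a} unchanged while the lower slab contributes \Cross{b} with strands relabelled via $i\mapsto\of{\bpi}{i}$, and recognise that relabelling as the action $\Cross{b}^{\pis{a}}$. Your flagged ``main obstacle'' (checking that the relabelling matches the rearrangement convention rather than its inverse) is indeed the only place one can go wrong, and the paper simply asserts it; your proposed check by unwinding the action on words is the right way to confirm it.
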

We will denote this \emph{crossing product} operation on (crossing) matrices by a circled asterisk:
\begin{equation*}
	\bprod{A}{B}\eqdef A+B^{\pis{A}}.
\end{equation*}

\subsection{Order reversal sets}\label{subsec:OR}

In \cite[\S 9.1]{Thurston}, Thurston defined the \deffont{order reversal set} of a permutation $\pi\in\SymGp{N}$:
\begin{equation*}
	\OR{\pi}\eqdef\setbld{(i,j)\in\single{1,2,\dots,N}\times\single{1,2,\dots,N} }{i<j\text{ but }\piof{i}>\piof{j}}.
\end{equation*}
He characterized the sets $S\subset\single{1,2,\dots,N}\times\single{1,2,\dots,N} $ which are order reversal sets for some permutation
$\pi\in\SymGp{N}$ via two properties which are easily seen to be necessary, and with a little more work are sufficient:

\begin{prop}[Thurston]\label{prop:ThurstonOR}
A  subset $S\subset\single{1,2,\dots,N}\times\single{1,2,\dots,N} $ equals \OR{\pi} for some $\pi\in\SymGp{N}$ if and only if
the following properties both hold:
\begin{enumerate}
	\item If $(i,j)\in S$, then for every $k$ between $i$ and $j$, either $(i,k)\in S$ or $(k,j)\in S$ (or both).
	\item Given $i<k<j$, if $(i,k)\in S$ and $(k,j)\in S$, then $(i,j)\in S$.
\end{enumerate}
When these properties hold, the permutation $\pi$ is uniquely determined by $S$.	
\end{prop}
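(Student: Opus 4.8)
The plan is to handle necessity by a direct check and sufficiency by reconstructing $\pi$ from the combinatorial data recorded in $S$.

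For necessity, observe that $\OR{\pi}$ is exactly the inversion set of $\pi$. Suppose $(i,j)\in\OR{\pi}$ and $i<k<j$. Since $\piof{i}>\piof{j}$, we cannot have both $\piof{i}<\piof{k}$ and $\piof{k}<\piof{j}$; hence either $\piof{i}>\piof{k}$, giving $(i,k)\in\OR{\pi}$, or $\piof{k}>\piof{j}$, giving $(k,j)\in\OR{\pi}$. This is property~(1). Property~(2) is immediate: if $i<k<j$ with $\piof{i}>\piof{k}$ and $\piof{k}>\piof{j}$, then $\piof{i}>\piof{j}$, so $(i,j)\in\OR{\pi}$.

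For sufficiency I would extract from $S$ the linear order that $\pi$ is meant to induce on positions. Define a relation $\prec$ on $\single{1,2,\dots,N}$ by: for $i<j$, put $i\prec j$ if $(i,j)\notin S$ and $j\prec i$ if $(i,j)\in S$. By construction, for every pair $i\neq j$ exactly one of $i\prec j$ and $j\prec i$ holds, so $\prec$ is a tournament. The key point is that properties~(1) and~(2) make $\prec$ transitive, hence a strict total order; since a tournament is transitive precisely when it contains no directed $3$-cycle, it suffices to check that on an arbitrary triple $x<y<z$ neither cyclic orientation can occur. Unwinding the definition of $\prec$: the cycle $x\prec y\prec z\prec x$ amounts to $(x,y)\notin S$, $(y,z)\notin S$, $(x,z)\in S$, which property~(1) (applied to the pair $(x,z)$ with intermediate index $y$) rules out; and the cycle $x\prec z\prec y\prec x$ amounts to $(x,y)\in S$, $(y,z)\in S$, $(x,z)\notin S$, which property~(2) rules out.

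Once $\prec$ is known to be a strict total order, define $\pi\in\SymGp{N}$ by setting $\piof{i}$ to be the rank of $i$ under $\prec$, that is $\piof{i}=1+\#\setbld{k}{k\prec i}$; this is a bijection of $\single{1,2,\dots,N}$ onto itself with $\piof{i}<\piof{j}$ iff $i\prec j$. Then for $i<j$ we get $(i,j)\in\OR{\pi}$ iff $\piof{i}>\piof{j}$ iff $j\prec i$ iff $(i,j)\in S$, so $\OR{\pi}=S$. Uniqueness needs nothing about $S$: if $\OR{\pi}=\OR{\pi'}$ then $\pi$ and $\pi'$ agree, for every pair of positions, on which one carries the larger value, so they induce the same order on positions and hence are equal. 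The one step requiring real care is the transitivity argument — matching each of the two cyclic orientations of a triple to the correct property — after which the construction of $\pi$ and the verification that $\OR{\pi}=S$ are routine.
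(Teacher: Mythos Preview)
Your proof is correct, and both the necessity argument and the uniqueness argument are essentially the same as the paper's. For sufficiency, however, you take a genuinely different route.

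The paper defines $\sigma(i)=i+\#\{k:(i,k)\in S\}-\#\{k:(k,i)\in S\}$ directly and then proves, by somewhat intricate counting (splitting into ranges $k<i$, $i<k<j$, $k>j$ and invoking both hypotheses several times), that $(i,j)\in S$ forces $\sigma(i)>\sigma(j)$ while $i<j$ with $(i,j)\notin S$ forces $\sigma(i)<\sigma(j)$; injectivity and the range bound then give a permutation. Your argument instead packages $S$ as a tournament $\prec$ and observes that properties~(1) and~(2) are exactly what is needed to exclude the two cyclic orientations on a triple, so $\prec$ is a strict total order and the rank map is automatically a bijection. This is shorter and makes the role of each hypothesis transparent: property~(1) kills one $3$-cycle, property~(2) the other. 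The paper's approach, on the other hand, produces the explicit formula $\sigma(i)=i+\text{(row sum)}-\text{(column sum)}$ that is used later (Equation~\ref{eqn:perm} and Remark~\ref{rmk:perm}) to read the permutation off a crossing matrix. In fact your rank formula $\pi(i)=1+\#\{k:k\prec i\}$ coincides with the paper's $\sigma(i)$ after a one-line rewrite, so nothing is lost; but if you want the connection to Equation~\ref{eqn:perm} to be visible, it is worth recording that identity.
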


\begin{proof}
	The \textbf{necessity} of these two conditions for an order reversal set is more easily seen when they are replaced with
	their contrapositives:
		\begin{description}
			\item[contra(1)] If, for some $k$ between $i$ and $j$, \emph{neither} $(i,k)$ \emph{nor} $(j,k)$ belongs to $S$,
			then neither does $(i,j)$,
			
			\item[contra(2)] If $(i,j)\not\in S$ given $i<k<j$, then \emph{at most one} of $(i,k)$ and $(k,j)$ belongs to $S$.
			
		\end{description}
		
		For the first, given $i<k<j$, if $\piof{i}<\piof{k}$ and $\piof{k}<\piof{j}$, then of course $\piof{i}<\piof{j}$;
		for the second, $\piof{i}<\piof{j}$ means we can't have both $\piof{k}<\piof{i}$ and $\piof{k}>\piof{j}$.

	To establish \textbf{sufficiency}, we consider the mapping \map{\sigma}{\nlist{N}}{\nlist{N}} defined by the
	analogue of Equation \ref{eqn:perm},
	\begin{equation}\label{eqn:permmap}
		\sigof{i}=i+\sharp\setbld{k}{(i,k)\in S}-\sharp\setbld{k}{(k.i)\in S}.
	\end{equation}
	\begin{description}
		\item[1. $\boldsymbol{(i,j)\in S \Rightarrow\sigof{i}>\sigof{j}}$] 
		
		Given $(i,j)\in S$, the first hypothesis insures that 
		\begin{equation*}
			\sharp\setbld{k<j}{(i,k)\in S}+\sharp\setbld{k>i}{(k,j)\in S}>j-i
		\end{equation*}
		since every $k$ between $i$ and $j$ appears in at least one of these two sets, and $j$ \resp{$i$}
		appears in the first \resp{second} set.  This inequality can be rewritten as
		\begin{align*}
			i+\sharp\setbld{k<j}{(i,k)\in S}&>j-\sharp\setbld{k>i}{(k,j)\in S}.\\
			\intertext{Now, the second hypothesis tells us that for $k>j$, $(j,k)\in S$ implies %
			(since $(i,j)\in S$) that also $(i,k)\in S$, hence}
			\sharp\setbld{k>j}{(i,k)\in S}&\geq\sharp\setbld{k>j}{(j.k)\in S}\\
			\intertext{and similarly, for $k<i$ if $(k,i)\in S$ then also $(k,j)\in S$, hence}
			-\sharp\setbld{k<i}{(k,i)\in S}&\geq-\sharp\setbld{k<i}{(k,j)\in S}.
		\end{align*}
		Adding these inequalities, we obtain
		\begin{multline*}
			i+\sharp\setbld{k<j}{(i,k)\in S}+\sharp\setbld{k>j}{(i,k)\in S}-\sharp\setbld{k<i}{(k,i)\in S}\\
			>j-\sharp\setbld{k>i}{(k,j)\in S}+\sharp\setbld{k>j}{(j.k)\in S}-\sharp\setbld{k<i}{(k,j)\in S}
		\end{multline*}
		or
		\begin{multline*}
			\sigof{i}
			\eqdef 
			i+\sharp\setbld{k}{(i,k)\in S}-\sharp\setbld{k}{(k,i)\in S}\\
			>j-\sharp\setbld{k}{(k,j)\in S}-\sharp\setbld{k}{(k,j)\in S}
			\eqdef
			\sigof{j}.
		\end{multline*}
		
		\item[2. $\boldsymbol{i<j\ \&\ (i,j)\not\in S \Rightarrow\sigof{i}<\sigof{j}}$]
		
		Replacing the first \resp{second} hypothesis with its contrapositive, which is to say
		arguments parallel to the above yield the opposite inequalities and hence the desired conclusion, that for $i<j$,
		if $(i,j)\not \in S$ then $\sigof{i}<\sigof{j}$.
		
	\end{description}
	The definition of $\sigof{i}$ immediately guarantees that $1\leq\sigof{i}\leq N$, and the two inequalities above show that
	$\sigma$ is injective, hence a permutation.  
	Finally, it is easy to see that different permutations have different order reversal sets,
	giving the uniqueness statement in Proposition \ref{prop:ThurstonOR}. 
\end{proof}
Thurston then formulated the  notion of a \emph{permutation braid}:
\begin{definition}\label{dfn:permbraid}
	A \deffont{permutation braid} is a positive braid in which no pair of strands crosses more than once.
\end{definition}
Given a permutation $\pi\in\SymGp{N}$--which is to say, given the ending position of each strand, we can construct a geometric
braid \pbraid{\pi} by joining $(i,1)\in\mathbb{R}^{2}$ to $(\piof{i},0)$ by a straight line segment and making all crossings
positive (in case this yields more than two such line segments crossing at the same point, we can perturb a little to get only pairwise 
crossings). It is clear that no pair of strands crosses more than once.  This shows that  every $\pi\in\SymGp{N}$ is the permutation
associated to some permutation braid;  moreover any permutation braid can be ``straightened out'' so as to be a \pbraid{\pi}.
Thus
\begin{remark}\label{rmk:permbraid} 
	The mapping \map{p}{\SymGp{N}}{\Br{N}} taking a permutation $\pi\in\SymGp{N}$ to the braid \pbraid{\pi} 
	defined above is a bijection onto
	the set of permutation braids.
\end{remark}
We caution the reader that this map is \emph{not a homomorphism}: for example a braid with a single crossing is a permutation 
braid, but its square is not.

\subsection{Characterization of crossing matrices}\label{subsec:crosschar}

The crossing matrix $\mathbf{\Rs{\pi}}$ of the permutation braid \pbraid{\pi} is clearly the same as the matrix
$R$ with $i,j$ entry $1$ if $(i,j)\in \OR{\pi}$ and  $0$ otherwise. 
Since all pairs $(i,j)\in\OR{\pi}$ have $i<j$, $R$ is \textbf{strictly upper triangular} ($\Rs{ij}=0$ for $i\geq j$).
The conditions in  Proposition~\ref{prop:ThurstonOR} characterizing the orientation-reversing set of a permutation 
can be reinterpreted as conditions on the matrix $R$, which we formulate in
\begin{definition}\label{dfn:T}
	We say a square matrix $A$ is $\boldsymbol{T0}$ \resp{$\boldsymbol{T1}$} if
	\begin{description}
		\item[T0] For any triple of indices $i<k<j$, if $\As{ik}=0$ and $\As{kj}=0$, then $\As{ij}=0$.
		
		\item[T1] For any triple of indices $i<k<j$, if $\As{ik}\neq0$ and $\As{kj}\neq0$, then $\As{ij}\neq0$.
	\end{description}
\end{definition}
Note that both conditions refer only to entries above the diagonal of $A$, and can be viewed as limitations on the distribution of zero entries (above the diagonal) in $A$.

\begin{definition}\label{dfn:Rmat}
	An $\boldsymbol{R}$\textbf{-matrix} is a strictly upper triangular matrix, all of whose nonzero entries are $1$, and which is both $T0$ and $T1$.
\end{definition}
For any strictly upper triangular \sqsize{N} matrix $R$, the positions of its nonzero entries form a set $S$ of pairs $(i,j)$ 
with $1\leq i<j\leq N$, and Proposition~\ref{prop:ThurstonOR} tells us when $S$ is an OR set.  
The sufficiency argument in the proof of Proposition~\ref{prop:ThurstonOR} amounts to saying that, for an $R$-matrix,
Equation~\ref{eqn:permmap}  defines a permutation $\sigma\in\SymGp{N}$.
It is easy to see that, for an $R$-matrix, the term in Equation~\ref{eqn:permmap} which is added to $i$ 
equals the $i^{th}$ row sum
\begin{equation*}
	\sharp\setbld{k}{(i,k)\in S}=\sharp\setbld{k>i}{(i,k)\in S}=\sum_{k=1}^{N}\Rs{ik}
\end{equation*}
and the subtracted term equals the $i^{th}$ column sum
\begin{equation*}
	\sharp\setbld{k}{(k.i)\in S}=\sharp\setbld{k<i}{(k.i)\in S}=\sum_{k=1}^{N}\Rs{ki}
\end{equation*}
so Equation~\ref{eqn:permmap} really is the same as Equation~\ref{eqn:perm} applied to the $R$-matrix $R$.
therefore does not change the permutation defined by Equation~\ref{eqn:perm}.  
Thus, a corollary of Proposition~\ref{prop:ThurstonOR} is
\begin{remark}\label{rmk:perm}
	For any \sqsize{N} matrix $A=S+R$, where $S$ is symmetric and $R$ is an $R$-matrix, the formula
	\begin{equation*}
		\pisof{A}{i}= i+\sum_{j=1}^{N}A_{ij}-\sum_{k=1}^{N}A_{ki}
	\end{equation*}
	defines a permutation $\pis{A}\in\SymGp{N}$.
\end{remark}

To characterize the matrices which arise as crossing matrices (of some, not necessarily positive, braid)
we note two necessary conditions:
\begin{prop}\label{prop:basicchar}
	For any crossing matrix $A=((\as{ij}))=\Cross{b}$, $b\in\Br{N}$,
	\begin{enumerate}
		\item the diagonal entries are all zero: 
		\begin{equation*}
			\as{ii}=0 \text{ for }i=1,\dots,N;
		\end{equation*}
		\item each entry above the diagonal is either equal to its symmetric twin, or exceeds it by one: 
		\begin{equation*}
		 	\as{ij}-\as{ji}\in\single{0,1} \text{ for }1\leq i<j\leq N.
		\end{equation*} 
		
	\end{enumerate}
\end{prop}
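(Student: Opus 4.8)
The plan is to treat the two assertions separately. Assertion (1) merely records the construction: a strand never crosses itself, so the $i,i$ entry of $\Cross{b}$ is an empty algebraic sum and $\as{ii}=0$; nothing further is needed.

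For assertion (2) I would fix indices $i<j$ together with a geometric braid $b$ representing the given braid class (the matrix $\Cross{b}$ does not depend on the representative), and single out the finitely many crossings that involve \emph{both} strand $i$ and strand $j$, listed in the order encountered as we move down the braid, $c_1,c_2,\dots,c_m$. The local observation is a four-case check: at such a crossing the net contribution to $\as{ij}-\as{ji}$ is $+1$ if strand $i$ lies to the \emph{left} of strand $j$ just above the crossing, and $-1$ if strand $j$ lies to the left. Indeed, when $i$ is on the left a positive (left-over-right) crossing is a positive crossing of $i$ over $j$ and adds $1$ to $\as{ij}$, while a negative crossing is a negative crossing of $j$ over $i$ and subtracts $1$ from $\as{ji}$; in either sub-case the contribution to $\as{ij}-\as{ji}$ is $+1$, and the two sub-cases with $j$ on the left are the mirror images, giving $-1$. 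Crossings involving a third strand change neither $\as{ij}$ nor $\as{ji}$, so $\as{ij}-\as{ji}$ equals exactly the sum of the $m$ contributions $\pm1$ coming from $c_1,\dots,c_m$.

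The second ingredient is that the relative left--right order of strands $i$ and $j$ is reversed at each $c_k$ and is untouched by crossings involving other strands: at the instant two strands cross they are horizontally adjacent, so a crossing of $i$ with some third strand leaves $j$ on the same side of $i$, and only a crossing of $i$ with $j$ can swap their order. Since in our numbering strand $i$ starts at the $i^{th}$ position from the left and $i<j$, strand $i$ begins to the left of strand $j$; hence the signs of the contributions of $c_1,c_2,\dots,c_m$ alternate $+1,-1,+1,\dots$, and their sum is $0$ when $m$ is even and $1$ when $m$ is odd. This is exactly $\as{ij}-\as{ji}\in\single{0,1}$, and in fact it identifies the alternative: the difference is $1$ precisely when $m$ is odd, i.e.\ when $(i,j)\in\OR{\pis{b}}$, and $0$ otherwise --- more than the statement asks for, but convenient later.

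The only real obstacle is bookkeeping: one must fix a consistent meaning for ``positive crossing'' (left over right, read as we descend), run the four cases without a sign slip, and confirm that the left--right order of $i$ and $j$ can change only at a crossing of $i$ with $j$. Both points are elementary, but a careless convention here is the easiest way to produce a spurious sign.
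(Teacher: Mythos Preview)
Your proof is correct and follows essentially the same route as the paper: for part (2) both arguments list the successive crossings of strands $i$ and $j$, observe that each contributes $\pm1$ to $\as{ij}-\as{ji}$ according to which strand is currently on the left, and use the fact that this left--right order alternates (starting with $i$ on the left) to conclude the partial sums oscillate between $0$ and $1$. Your four-case analysis and the remark tying the parity of $m$ to $\OR{\pis{b}}$ are slightly more explicit than the paper's version, but the underlying idea is identical.
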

\begin{proof}
\begin{enumerate}
	\item The first equation is the observation that no strand crosses itself.
	
	\item Suppose $1\leq i<j\leq N$.  Since strand $i$ starts to the left of strand $j$, if any crossings of these two strands occur,
	the \emph{first} one mst move $i$ to the right of $j$
	either via a \emph{positive} crossing of $i$ \emph{over} $j$, or via a
	\emph{negative} crossing of $j$ over $i$; 
	the effect of this  is to contribute an \emph{increase} by one to the difference $\as{ij}-\as{ji}$.
	A \emph{second} crossing must move $i$ back to the left of $j$, 
	either via a \emph{negative} crossing of $i$ over $j$ 
	or via a \emph{positive} crossing of $j$ over $i$;
	this \emph{decreases} the difference $\as{ij}-\as{ji}$ by one.
	
	As long as crossings of $i$ with $j$ continue, these two situations will alternate strictly, 
	so the difference $\as{ij}-\as{ji}$ will oscillate between $0$ and $1$.
\end{enumerate}
\end{proof}

A braid is $b$ called \deffont{pure} if its permutation $\pis{b}$ (as defined in \S\ref{subsec:braidprod})
 is the identity permutation.  It is easy to see that this condition can be expressed by saying that \emph{the $i^{th}$
 row sum equals the $i^{th}$ column sum} for $i=1,\dots,N$.  If for each pair of indices $i,j$ with 
 $1\leq i<j\leq N$ we set $\xs{ij}=\as{ij}-\as{ji}$, this condition becomes the system of $N-1$ equations in $\frac{(N-1)(N-2)}{2}$ 
 unknowns of the form $\sum_{j=i+1}^{N}\xs{ij}=0$, $i=1,\dots,N-1$.  This system has many integer solutions, for example 
 the matrix
 \begin{equation*}
 	\left[\begin{array}{ccc}
		0 & 2 & 0 \\
		1 & 0 & 2 \\
		1 & 1 & 0
	\end{array}\right]
 \end{equation*}
 has each row sum equal to the corresponding column sum,
 but if we also throw
 in the earlier requirement that $\as{ij}-\as{ji}\eqdef\xs{ij}\in\single{0,1}$ we see that the only solution is $\xs{ij}=0$ for all $i<j$.  Thus
 \begin{lemma}\label{lem:puresym}
 	The crossing matrix of every pure braid is symmetric.
 \end{lemma}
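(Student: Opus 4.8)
The plan is to combine the two necessary conditions of Proposition~\ref{prop:basicchar} with the description of purity in terms of row and column sums, and then to peel the rows off the matrix one at a time by an induction on the row index. The point is that, as noted in the discussion preceding the lemma, the unconstrained linear system ``row sums $=$ column sums'' has many integer solutions (witness the displayed $\sqsize{3}$ example), so the whole content of the lemma is that the extra integrality constraint $\xs{ij}\in\single{0,1}$ coming from Proposition~\ref{prop:basicchar}(2) collapses all of those solutions to the zero solution.

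First I would recall that, under the identification of $\pis{b}$ with the map given by Equation~\ref{eqn:perm}, a braid $b$ with crossing matrix $A=\Cross{b}$ is pure exactly when $\pisof{A}{i}=i$ for every $i$, i.e.\ when the $i^{\text{th}}$ row sum $\sum_{j}\as{ij}$ equals the $i^{\text{th}}$ column sum $\sum_{k}\as{ki}$ for each $i=1,\dots,N$. Writing $\xs{ij}=\as{ij}-\as{ji}$ for $i<j$ and using $\as{ii}=0$, the contribution of the pair $\{i,j\}$ to ``row $i$ minus column $i$'' is $+\xs{ij}$ when $j>i$ and $-\xs{ji}$ when $j<i$, so purity is equivalent to the triangular system
\begin{equation*}
	\sum_{j>i}\xs{ij}-\sum_{j<i}\xs{ji}=0,\qquad i=1,\dots,N.
\end{equation*}
Then I would invoke Proposition~\ref{prop:basicchar}(2), which gives $\xs{ij}\in\single{0,1}$ for all $i<j$, so every term in these sums is a nonnegative integer. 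Now induct on $i$: for $i=1$ the equation reads $\sum_{j>1}\xs{1j}=0$, a sum of nonnegative integers, whence $\xs{1j}=0$ for all $j>1$; and if $\xs{ki}=0$ for all $k<i$ (the inductive hypothesis), the equation at index $i$ collapses to $\sum_{j>i}\xs{ij}=0$, again forcing $\xs{ij}=0$ for every $j>i$. After $N-1$ steps we conclude $\xs{ij}=0$, i.e.\ $\as{ij}=\as{ji}$, for all $i<j$; together with $\as{ii}=0$ this is exactly the assertion that $A=\Cross{b}$ is symmetric.

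I do not expect any genuine obstacle here: the only things to be careful about are the sign bookkeeping in the ``row sum minus column sum'' identity (so that the $j<i$ terms enter with the correct sign) and the fact that the system is triangular rather than literally $\sum_{j>i}\xs{ij}=0$ for every $i$ at once — which is precisely why the argument is phrased as an induction from $i=1$ upward rather than as a single linear-algebra computation.
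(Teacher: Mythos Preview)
Your proof is correct and follows essentially the same route as the paper: translate purity into ``row sum $=$ column sum'' for each $i$, set $\xs{ij}=\as{ij}-\as{ji}$, and use the constraint $\xs{ij}\in\single{0,1}$ from Proposition~\ref{prop:basicchar}(2) to force all $\xs{ij}=0$. Your version is in fact more careful than the paper's sketch: the paper writes the system simply as $\sum_{j>i}\xs{ij}=0$ for each $i$, whereas you correctly observe that the raw equations are $\sum_{j>i}\xs{ij}-\sum_{j<i}\xs{ji}=0$ and then recover the paper's simpler form inductively, starting from $i=1$.
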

  We note that the pure braids form a subgroup \Pur{N} of the braid group \Br{N}, and that the restriction of the crossing matrix
 mapping to pure braids is a homomorphism to the additive group \SymMat{N}
 of symmetric \sqsize{N} integer matrices with zero diagonal. This map is also surjective;  
 to see this, note that each of the symmetric matrices $\Ssub{ij}$ whose only nonzero entries are
 a ``$1$'' in the $i,j$ and $j,i$ positions is the crossing matrix of the braid \ssub{ij} in which strand $i$ crosses \emph{over} all intermediate 
 strands, ``hooks'' strand $j$, and then crosses back \emph{over} all the intermediate strands%
 \footnote{
 (Note that the latter set of crossings is \emph{negative}.)
 }
  to return to its initial position.  

Since the \sqsize{N} matrices $\Cross{\ssub{ij}}=\Ssub{ij}$ for $1\leq i<j\leq N$ generate the additive group \SymMat{N}, this shows 
\begin{prop}\label{prop:PontoS}
	The crossing matrix map takes the subgroup \Pur{N} of pure $N$-strand  braids onto the additive group \SymMat{N}
	of \sqsize{N} symmetric integer matrices with zero diagonal:
	\begin{equation*}
		\Cross{\Pur{N}}=\SymMat{N}.
	\end{equation*}
\end{prop}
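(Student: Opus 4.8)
The plan is to prove the proposition by establishing the two inclusions $\Cross{\Pur N}\subseteq\SymMat N$ and $\SymMat N\subseteq\Cross{\Pur N}$. The first is already in hand: Proposition~\ref{prop:basicchar}(1) gives the zero diagonal and Lemma~\ref{lem:puresym} gives symmetry, so $\Cross b\in\SymMat N$ whenever $b\in\Pur N$. I would also record at this point that on the pure braid subgroup the crossing product degenerates to ordinary matrix addition: if $a\in\Pur N$ then $\pis a$ is the identity permutation, so the product formula of \S\ref{subsec:braidprod} becomes $\Cross{ab}=\Cross a+\Cross{b}^{\pis{a}}=\Cross a+\Cross b$. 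Consequently the restriction of the crossing-matrix map to $\Pur N$ is a homomorphism into the additive group $\SymMat N$, and in particular its image $\Cross{\Pur N}$ is a \emph{subgroup} of $\SymMat N$.

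For the reverse inclusion I would reduce matters to hitting a generating set. For $1\le i<j\le N$, the matrix $\Ssub{ij}$ whose only nonzero entries are a $1$ in positions $(i,j)$ and $(j,i)$ is realized as a crossing matrix by the braid $\ssub{ij}$ described before the statement, in which strand $i$ travels to the right crossing \emph{over} each intermediate strand, hooks around strand $j$, and then returns over the intermediate strands to position $i$. Granting $\Cross{\ssub{ij}}=\Ssub{ij}$ for all such $i,j$, and noting that every element of $\SymMat N$ is an integer linear combination $\sum_{i<j}a_{ij}\Ssub{ij}$, the subgroup $\Cross{\Pur N}$ contains a generating set of $\SymMat N$ and hence equals it; together with the first inclusion this gives $\Cross{\Pur N}=\SymMat N$.

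The one genuinely technical point --- and the step I would treat most carefully --- is the verification that $\ssub{ij}$ is a pure braid with crossing matrix exactly $\Ssub{ij}$. That $\ssub{ij}$ is pure is a displacement count: strand $i$ ends where it began, each intermediate strand is passed over once on the way out and once on the way back, so it is shifted one place left and then one place right to its original slot, and the strands outside the block $[i,j]$ never move; hence $\pis{\ssub{ij}}$ is the identity. For the matrix one tracks signs. Outbound, strand $i$ is the left strand moving over strand $k$ for each $i<k\le j$, a left-over-right (positive) crossing contributing $+1$ to entry $(i,k)$. The hook has strand $i$ pass over strand $j$ and then back under it; on the inbound pass strand $i$ is now the right strand, so its crossings over the intermediate strands $i<k<j$ are right-over-left (negative), contributing $-1$ to entry $(i,k)$ and cancelling the outbound contribution, while the under-pass of strand $j$ registers as strand $j$ (the left strand) over strand $i$, left-over-right (positive), contributing $+1$ to entry $(j,i)$. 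The net result is precisely a $1$ in positions $(i,j)$ and $(j,i)$ and zeros elsewhere, i.e.\ $\Cross{\ssub{ij}}=\Ssub{ij}$. Everything else in the argument is assembly of facts already proved.
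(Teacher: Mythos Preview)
Your proof is correct and follows essentially the same approach as the paper: establish that the crossing-matrix map restricted to $\Pur{N}$ is a homomorphism into $\SymMat{N}$, then obtain surjectivity by exhibiting the generators $\Ssub{ij}$ as $\Cross{\ssub{ij}}$. Your write-up is more detailed than the paper's, which simply asserts $\Cross{\ssub{ij}}=\Ssub{ij}$ from the geometric description; your explicit sign-tracking for the outbound, hook, and inbound crossings is a welcome verification of that claim (note only that your phrase ``for each $i<k\le j$'' in the outbound pass already includes the over-$j$ crossing you then re-describe as part of the hook, so the wording slightly overlaps, though the count itself is right).
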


To extend our characterization of crossing matrices to \emph{all} braids, we note that if $b\in\Br{N}$ is a braid with permutation \pis{b},
then the braid $s(b)\eqdef b(\pbraid{\pis{b}})^{-1}$ is a pure braid, 
and so we have a unique factoring $b=s(b)\pbraid{\pis{b}}$ as a pure braid followed by
a permutation braid.  It follows that we can write the crossing matrix of $b$ as
\begin{equation*}
	\Cross{b}=\Cross{s(b)\pbraid{\pis{b}}}=\bprod{S}{R}=S+R,
\end{equation*}  
 where $S=\Cross{s(b)}\in\SymMat{N}$ (so $\pis{S}=id$) and $R=\Rs{\pis{b}}$ is an $R$-matrix.
Since $R$-matrices are upper triangular, the expression $A=S+R$ is uniquely determined (if it exists) for any matrix; we will refer to
it as the \deffont{$\boldsymbol{SR}$ decomposition} of $A$..
We can therefore complete our characterization of crossing matrices for (general) braids:
\begin{theorem}\label{thm:CofB}
	An \sqsize{N} matrix $A$ is the crossing matrix of some braid if and only if it has an $SR$ decomposition.
\end{theorem}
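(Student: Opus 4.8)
The plan is to assemble ingredients already in hand: Thurston's characterization of $R$-matrices (Proposition~\ref{prop:ThurstonOR}, rephrased through Definitions~\ref{dfn:T} and~\ref{dfn:Rmat}), the surjectivity of the crossing map on pure braids (Proposition~\ref{prop:PontoS}), Remark~\ref{rmk:permbraid} together with the identity $\Cross{\pbraid{\pi}}=\Rs{\pi}$, and the product formula of \S\ref{subsec:braidprod}. The necessity direction is essentially the discussion immediately preceding the statement; the real (though light) content is in sufficiency, where the one thing to watch is how permutations enter the crossing product.

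\textbf{Necessity.} Given $b\in\Br{N}$ with permutation $\pis{b}$, I would use the factorization $b=s(b)\,\pbraid{\pis{b}}$ with $s(b)\eqdef b(\pbraid{\pis{b}})^{-1}$ pure. Set $S\eqdef\Cross{s(b)}$ and $R\eqdef\Cross{\pbraid{\pis{b}}}$. By Lemma~\ref{lem:puresym} and Proposition~\ref{prop:basicchar}(1), $S\in\SymMat{N}$. The matrix $R=\Rs{\pis{b}}$ is strictly upper triangular with all nonzero entries equal to $1$, and the positions of its nonzero entries form $\OR{\pis{b}}$, which by Proposition~\ref{prop:ThurstonOR} satisfies conditions (1) and (2); restated via Definition~\ref{dfn:T}, this says $R$ is $T0$ and $T1$, so $R$ is an $R$-matrix. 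Since $s(b)$ is pure, $\pis{s(b)}$ is the identity permutation, so the product formula gives $\Cross{b}=\Cross{s(b)}+\Cross{\pbraid{\pis{b}}}^{\pis{s(b)}}=S+R$, an $SR$ decomposition of $\Cross{b}$.

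\textbf{Sufficiency.} Conversely, suppose $A=S+R$ with $S\in\SymMat{N}$ and $R$ an $R$-matrix. By Proposition~\ref{prop:PontoS} choose a pure braid $c\in\Pur{N}$ with $\Cross{c}=S$. The positions of the nonzero entries of $R$ form a set of pairs $(i,j)$ with $i<j$; since $R$ is $T0$ and $T1$, this set satisfies the two conditions of Proposition~\ref{prop:ThurstonOR}, hence equals $\OR{\pi}$ for a unique $\pi\in\SymGp{N}$, and then $\Cross{\pbraid{\pi}}=\Rs{\pi}=R$. Put $b\eqdef c\,\pbraid{\pi}$. Because $c$ is pure, $\pis{c}$ is the identity permutation, so $R^{\pis{c}}=R$, and the product formula yields
\begin{equation*}
	\Cross{b}=\Cross{c\,\pbraid{\pi}}=\Cross{c}+\Cross{\pbraid{\pi}}^{\pis{c}}=S+R^{\pis{c}}=S+R=A,
\end{equation*}
so $A$ is the crossing matrix of the braid $b$.

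\textbf{Where the care is needed.} There is no deep step: the theorem is a bookkeeping corollary of what is already proved. The two points that genuinely require attention are (i) that the crossing product $\Cross{a}+\Cross{b}^{\pis{a}}$ collapses to an ordinary sum $S+R$ precisely because the left factor is a \emph{pure} braid, so its permutation is trivial and no reindexing of $R$ occurs; and (ii) that the $T0/T1$ conditions defining an $R$-matrix are, position for position, exactly the two conditions of Proposition~\ref{prop:ThurstonOR} on the corresponding set of index pairs, so that an $R$-matrix is precisely $\Rs{\pi}$ for a (unique) $\pi$, realized by $\pbraid{\pi}$ via Remark~\ref{rmk:permbraid}.
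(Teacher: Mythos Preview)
Your proof is correct and follows essentially the same approach as the paper: the necessity direction is exactly the factorization $b=s(b)\,\pbraid{\pis{b}}$ discussed just before the theorem, and your sufficiency argument (choose a pure braid realizing $S$ via Proposition~\ref{prop:PontoS}, a permutation braid realizing $R$ via Proposition~\ref{prop:ThurstonOR}/Remark~\ref{rmk:permbraid}, and multiply) is precisely what the paper leaves implicit. Your explicit attention to why the crossing product collapses to an ordinary sum when the left factor is pure is a welcome clarification.
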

We denote the set of all \sqsize{N} integer matrices with zero diagonal which have an $SR$ decomposition by $\boldsymbol{\SR{N}}$.

It can sometimes be difficult to immediately visualize the $SR$ decomposition of an integer matrix, so we have 
adopted a tableau notation to make this clearer: given an \sqsize{N} matrix with an $SR$ decomposition $A=S+R$, 
we exhibit each entry of $A$ $i,j$ strictly above the diagonal ($1\leq i<j\leq N$) as the sum of the corresponding entries \ssub{ij} 
of $S$ and \rs{ij} of $R$, in the form ``$\ssub{ij}S+\rs{ij}R$''; note that 
\begin{align*}
	\ssub{ij}&=\as{ji}\\
	\rs{ij}&=\as{ij}-\as{ji}.
\end{align*}
For example, if
\begin{equation*}
	A=\left[\begin{array}{cccc}
			0 & 1 & 3 & 1 \\
			0 & 0 & 0 & 1 \\
			2 & 0 & 0 & 0 \\
			0 & 1 & -1 & 0
		\end{array}\right]
		=\left[\begin{array}{cccc}
			0 & 0 & 2 & 0 \\
			0 & 0 & 0 & 1 \\
			2 & 0 & 0 & -1 \\
			0 & 1 & -1 & 0
		\end{array}\right]
		+
		\left[\begin{array}{cccc}
			0 & 1 & 1 & 1 \\
			0 & 0 & 0 & 0 \\
			0 & 0 & 0 & 1 \\
			0 & 0 & 0 & 0
		\end{array}\right]
\end{equation*}
then the \deffont{$\boldsymbol{SR}$ tableau} encoding this data is
\begin{equation*}
	\SRT{A}=\fourTableau{R}{2S+R}{R}{0}{S}{-S+R}
\end{equation*}

To complete this picture, we note that the crossing matrix map is not injective.  
First, we observe that the restriction of the crossing matrix map to the subgroup \Pur{N} is the abelianization of that group;
it follows that its kernel is the commutator subgroup of \Pur{N}.  In general, two braids $a$ and $b$ with the same crossing
matrix will have the same permutation (according to the formula in Equation~\ref{eqn:perm})
so $c=\invbar{a}b$ is a pure braid with zero crossing matrix;  it follows that
$b=ac$, where $c$ belongs to the
commutator subgroup of the pure braid group--that is, $c$ can be written as a product of finitely many braids of the form
$[ p,q ]\eqdef\ps{i}\qs{i}\invbar{\ps{i}}\invbar{\qs{i}}$.  
We formalize this observation:
\begin{prop}\label{prop:kerC}
	Two braids $a,b\in\Br{N}$ satisfy $\Cross{a}=\Cross{b}$ 
	if and only if $a=bc$, where $c$ belongs to the commutator subgroup of \Pur{N};  that is,
	$c=[\ps{1},\qs{1}]\cdots[\ps{k},\qs{k}]$ where $\ps{i},\qs{i}\in\Pur{N}$ for $i=1,\dots,k$.
\end{prop}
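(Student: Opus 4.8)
The plan is to make precise the remarks preceding the statement; the only genuinely external ingredient will be the identification of the kernel of the crossing-matrix map restricted to $\Pur{N}$ with the commutator subgroup $[\Pur{N},\Pur{N}]$, and everything else is bookkeeping with the crossing product formula of \S\ref{subsec:braidprod} and Proposition~\ref{prop:PontoS}. For the ``if'' direction there is almost nothing to do: the restriction of the crossing-matrix map to $\Pur{N}$ is a homomorphism into the \emph{abelian} group $\SymMat{N}$, so it sends every commutator of pure braids, and hence the product $c=[\ps{1},\qs{1}]\cdots[\ps{k},\qs{k}]$, to the zero matrix; since also $c\in\Pur{N}$, the product formula gives $\Cross{a}=\Cross{bc}=\Cross{b}+\Cross{c}^{\pis{b}}=\Cross{b}$, the last step because rearranging the zero matrix yields the zero matrix.

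For the converse, suppose $\Cross{a}=\Cross{b}$ and put $c\eqdef\invbar{b}a$, so that $a=bc$. Applying the product formula to this factorization gives $\Cross{b}=\Cross{a}=\Cross{bc}=\Cross{b}+\Cross{c}^{\pis{b}}$, hence $\Cross{c}^{\pis{b}}=0$, and therefore $\Cross{c}=0$ (a matrix is zero as soon as one of its rearrangements is). By Equation~\ref{eqn:perm} it follows that $c$ has the identity permutation, i.e. $c\in\Pur{N}$. Thus $c$ is a pure braid with zero crossing matrix, and the proposition will follow once we know that every such braid lies in $[\Pur{N},\Pur{N}]$ --- in other words, once we identify the kernel of the crossing-matrix map on $\Pur{N}$ with $[\Pur{N},\Pur{N}]$.

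That identification is the step requiring input from outside this excerpt, and I expect it to be the main obstacle. The inclusion of $[\Pur{N},\Pur{N}]$ in this kernel is automatic because $\SymMat{N}$ is abelian; for the reverse inclusion I would invoke the classical fact (Artin) that $\Pur{N}$ is generated by the pure braids $\ssub{ij}$, $1\le i<j\le N$. Given $c\in\Pur{N}$ with $\Cross{c}=0$, collecting exponents modulo commutators writes $c\equiv\prod_{i<j}\ssub{ij}^{m_{ij}}\pmod{[\Pur{N},\Pur{N}]}$ for integers $m_{ij}$; since the crossing-matrix map on $\Pur{N}$ kills commutators it depends only on the class mod $[\Pur{N},\Pur{N}]$, so
\begin{equation*}
	0=\Cross{c}=\sum_{i<j}m_{ij}\,\Cross{\ssub{ij}}=\sum_{i<j}m_{ij}\,\Ssub{ij},
\end{equation*}
and because the $\Ssub{ij}$ form a $\mathbf{Z}$-basis of $\SymMat{N}$ (each has a single nonzero entry above the diagonal), all $m_{ij}$ vanish, i.e. $c\in[\Pur{N},\Pur{N}]$. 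Equivalently, $\Pur{N}/[\Pur{N},\Pur{N}]$ is generated by the $\binom{N}{2}$ classes of the $\ssub{ij}$ and maps onto the free abelian group $\SymMat{N}$ of the same rank by sending these generators to a basis, hence is an isomorphism, so its trivial kernel pulls back to $[\Pur{N},\Pur{N}]$. Either way, $c=[\ps{1},\qs{1}]\cdots[\ps{k},\qs{k}]$ with each $\ps{i},\qs{i}\in\Pur{N}$, which finishes the proof. The hard part is precisely this reverse inclusion: excluding a kernel strictly larger than $[\Pur{N},\Pur{N}]$ needs a generating set for $\Pur{N}$, and the surjectivity of Proposition~\ref{prop:PontoS} alone does not supply one.
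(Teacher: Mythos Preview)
Your argument is correct and follows the same route as the paper: the text preceding the proposition already sketches exactly this---same permutation from Equation~\ref{eqn:perm}, hence $c=\invbar{b}a$ (the paper writes $\invbar{a}b$, a harmless swap of roles) is pure with zero crossing matrix, and then one invokes that $\Cross{\cdot}$ restricted to $\Pur{N}$ is the abelianization. The paper merely asserts this last identification, whereas you actually supply a proof of it via Artin's generators $\ssub{ij}$ and the fact that their images $\Ssub{ij}$ are a $\mathbf{Z}$-basis of $\SymMat{N}$; so your write-up is, if anything, more complete than the paper's.
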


\section{Crossing Matrices of Positive Braids}\label{sec:CMP}

We turn now to the focus of this paper, the crossing matrices of \emph{positive} braids; we will adapt all of our notation to this case
by using a superscript ``plus'' to denote positivity:  \Brp{N} \resp{\Purp{N}} will denote the \emph{positive} \resp{pure positive} braids.  Again, our interest in this special case is
prompted by the fact that there is no cancellation of crossings in the crossing matrix: for $b\in\Brp{N}$, every crossing is accounted
for in \Cross{b}.

\subsection{Two examples}\label{sec:examples}
It would be natural to expect, in view of Theorem~\ref{thm:CofB}, that \Cross{\Brp{N}} consists of all matrices in
\SR{N} with all entries non-negative.  However, this is false;  the 
following are examples of elements of \SR{N} which, while they have non-negative entries and 
are crossing matrices of \emph{some} braids%
, are not crossing matrices of any \emph{positive} braids.

It will prove easier to understand these and other examples using their $SR$ tableaux:
\begin{equation*}
	\SRT{G}=\fourTableau{R}{S}{0}{0}{S}{R}
\end{equation*}
and
\begin{equation*}
	\SRT{K}=
	\begin{array}{ccccc}
		\hline
		\multicolumn{1}{|c}{\labbox{1}} & 
		\multicolumn{1}{|c}{0} & 
		\multicolumn{1}{|c}{S} & 
		\multicolumn{1}{|c}{0} &
		\multicolumn{1}{|c|}{0}
		\\\hline 
	& 
		\multicolumn{1}{|c}{\labbox{2}} & 
		\multicolumn{1}{|c}{R} &
		\multicolumn{1}{|c}{R} &
		\multicolumn{1}{|c|}{0} 
		\\\cline{2-5}
	& &
		\multicolumn{1}{|c}{\labbox{3}} & 
		\multicolumn{1}{|c}{R} &
		\multicolumn{1}{|c|}{S} 
		\\\cline{3-5}
	& &&
		\multicolumn{1}{|c}{\labbox{4}} & 
		\multicolumn{1}{|c|}{0}  
		\\\cline{4-5}
	& & & &
		\multicolumn{1}{|c|}{\labbox{5}} 
		\\\cline{5-5}
	\end{array}%
\end{equation*}

We will explain why these two matrices are not crossing matrices of any positive braids after developing some properties of such
crossing matrices.

\subsection{The positive realization problem}\label{subsec:bprops}

The positive braids form a semigroup which includes all permutation braids, so any product of permutation braids, while it may no 
longer be a \emph{permutation} braid, will be a \emph{positive} braid.  Conversely, since the standard generators of the braid group 
are the permutation braids for transpositions of adjacent strands and a positive braid is given by a positive word in these generators,
any realization of a matrix $A$ as the crossing
matrix of some positive braid can always be expressed as a product of permutation braids, 
and this corresponds to a factoring  of $A$ into a product (with respect to the crossing product operation \bmult) of $R$-matrices.

\begin{remark}\label{rmk:factoring}
	An \sqsize{N} matrix $A$ is the crossing matrix of some positive braid if and only if it can be factored as a product 
	\begin{equation*}
		A=\Rs{\pis{1}}\bmult\Rs{\pis{2}}\bmult\cdots\bmult\Rs{\pis{k}}
	\end{equation*}
	of $R$-matrices.
\end{remark}

Our problem, then, is how to tell, given a matrix $A$, whether such a factoring is possible.
Since any permutation braid is a product of single-crossing braids (\emph{i.e.,} any permutation is a product of transpositions)
we can focus on factorization into crossing matrices of transpositions.%
\footnote{
	By abuse of notation, we will refer to  ``factoring into transpositions''.
} 

We note that in addition to the fact that all entries in the crossing matrix of a positive braid are non-negative integers,
there is an immediate further restriction on such crossing matrices: 
\begin{remark}\label{rmk:T0}
	If a braid is positive, $b\in\Brp{N}$, then its crossing matrix \Cross{b} must
	\begin{enumerate}
		\item have non-negative integer entries and
		\item satisfy the $T0$ property (Definition~\ref{dfn:T})
	\end{enumerate}
\end{remark}
To see the second requirement, note that the argument for $T0$  in the context of $R$-matrices 
(the necessity of the first condition in Proposition~\ref{prop:ThurstonOR}) 
is based only on the assumption that permutation braids are positive.
By contrast, the argument for $T1$ in Proposition~\ref{prop:ThurstonOR} 
is based on the assumption that in a permutation braid no pair of strands crosses more than once; 
this is not necessarily the case for general positive braids.  

In view of Remark~\ref{rmk:T0}, we define $\boldsymbol{\SRp{N}}$ to be 
the set of (non-negative integer, zero diagonal) $T0$ matrices which can be written
as the sum of a (non-negative integer) symmetric matrix and an $R$-matrix:
\begin{equation*}
	\SRp{N}\eqdef
	\setbld{A\in\mathbb{Z}^{+}}%
		{A \text{ is }T0 \text{ and }A=S+R, \text{ where }S\in\SymMat{N}\text{ and }R\text{ is an }R-\text{matrix}} 
\end{equation*}
We caution that the $T0$ property for $A$ does not necessarily require $S$,  the symmetric part of the decomposition, to be $T0$
on its own (unless, of course, $R=0$, so the braid is pure).

Note that the two examples in Section~\ref{sec:examples} fit this description: $G\in\SRp{4}$ and $K\in\SRp{5}$.

\subsection{Left division by permutations}\label{subsec:multbyperm}
To study the factorization problem posed by  Remark~\ref{rmk:T0}, we formulate a partial inverse to the crossing product operation \bmult. 
If we solve the equation
\begin{equation*}
	A=\bprod{\Rs{\pi}}{B}\eqdef\Rs{\pi}+B^{\pi}
\end{equation*}
for $B$ in terms of $A$, we obtain a ``division on the left'', defined by
\begin{align*}
	B&=\ldiv{\Rs{\pi}}{A}\eqdef(A-\Rs{\pi})^{\invbar{\pi}}\\
	\intertext{which we will usually refer to as \deffont{left division by $\boldsymbol{\pi}$}:}
	B&=\ldiv{\pi}{A}.
\end{align*}
If $A$ and $B$ are to be crossing matrices of positive braids, we must check conditions which insure, for $A\in\SRp{N}$,
that $B\in\SRp{N}$.  

The first requirement is the obvious one, that all entries of the matrix $A-\Rs{\pi}$ are non-negative;
this is the same as requiring that $\Rs{\pi}\leq A$ entrywise.  When this is the case, we will say that the permutation $\pi$
is \deffont{subordinate} to $A$. 
	
The second requirement is that $B$ be $T0$.  To understand how this can fail, 
we take a detour and develop a way to ``localize'' some of the effects
of multiplication (\bprod{\Rs{\pi}}{B}) and division (\ldiv{\pi}{A}) on a matrix $B\in\SRp{N}$ \resp{$A\in\SRp{N}$}.

\subsubsection{Configurations}\label{subsubsec:config}
Given a braid $b$ on $N$ strands, we can pick out a proper subset \subind{} of $m<n$ strands 
and study the relations just between these selected strands by considering the ``subbraid'' $\bs{\subind}\in\Br{m}$
which results from erasing all the other strands of $b$.  The (crossing) matrix analogue of this is, given $A\in\SRp{N}$,
to look at the \sqsize{m} submatrix \AI{}  consisting of elements whose indices are both in \subind.  
Note that if $A\in\SRp{N}$ then $\AI\in\SRp{m}$.  

	We can abstract this:
	\begin{definition}[Configuration]
		Given an \sqsize{N} matrix $A$ and an index set 
		\begin{equation*}
			\subind=\single{\ks{1},\ks{2},\dots,\ks{m}}\subset\nlist{N}\quad(m\leq N), 
		\end{equation*}
		the \textbf{configuration} of $A$ on \subind{}
		is the \sqsize{m} matrix $\boldsymbol{\AI}$ consisting of the rows and columns of $A$ whose indices belong to \subind.
	\end{definition}

	When $A$ is acted on by a permutation $\pi$ (for example as part of a  product or left division), the entries of
	\AI{} change position, in two ways: 
	the \emph{set} of new positions is the image $\piof{\subind}$ of \subind{} (as a mapping
	of \nlist{N} to itself), 
	and their \emph{relative order} may be scrambled;
	the scrambling action is given by the permutation \piI{} whose order-reversal set is the intersection of \OR{\pi} with
	$\subind\times\subind$: 
	\begin{equation*}
		\OR{\piI}=\OR{\pi}\cap(\subind\times\subind).
	\end{equation*}
	We refer to the permutation \piI{} (by abuse of terminology) as the \textbf{restriction of $\boldsymbol{\pi}$} to \subind.

	It is easy to confirm that the total effect of $\pi$ on configurations is given by 
%
%
\begin{remark}\label{rmk:submatperm}
For any \sqsize{N} matrix $A$, any subset $\subind\subset\single{1,\dots,N}$ 
and any permutation $\pi$ of \single{1,\dots,N},
	\begin{equation*}
		(A^{\pi})_{\piof{\subind}}=(\AI)^{\piI}.
	\end{equation*}
\end{remark}
The point here is that, although the set of indices \subind{} is not in general invariant under $\pi$, 
it is still true that if $\ks{i}<\ks{j}$ are two elements of \subind{},
then $\piof{\ks{i}}>\piof{\ks{j}}$ \emph{only} if $\of{\piI}{i}>\of{\piI}{j}$. 

Remark~\ref{rmk:submatperm} lets us trace a configuration of crossings corresponding to a sub-braid
through operations like ``division by a permutation'' without regard to how crossings outside that
configuration are affected.   

\begin{remark}\label{rmk:permsubmat}
	Given a  permutation $\pi$ and an index set $\subind=(\ks{1},\dots,\ks{m})$,
	\begin{equation*}
		\Rs{\piI}=(\Rs{\pi})_{\subind}.
	\end{equation*}
	In particular, given a matrix $A$, a permutation $\pi$ and the index set \subind,  the effect 
	on the configuration \AI{} corresponding to \subind{} 
	of multiplying $A$ by $R=\Rs{\pi}$
	is given by
	\begin{equation}\label{eqn:configprod}
		\lbrack(\bprod{R}{A})_{\subind^{\pi}}\rbrack_{\pis{i},\pis{j}}=\As{i,j}+\Rs{i,j}\quad\text{ for }
		i,j\in\subind
	\end{equation}
	while the effect of left-dividing $A$ by $\pi$ (provided $\pi\leq A$) is expressed 
	by
	\begin{equation}\label{eqn:configldiv}
		\lbrack(\ldiv{\pi}{A})_{\subind^{\pi}}\rbrack_{\bpis{i},\bpis{j}}=\As{i,j}-\Rs{i,j}\quad\text{ for }
		i,j\in\subind.
	\end{equation}
\end{remark}

A consequence of Remark~\ref{rmk:permsubmat} is that for any configuration \AI{} of $A$, if the restriction
\piI{} to \subind{} of a permutation $\pi$ is the identity (that is, its order reversal set is disjoint from \subind),
then the configurations $(\bprod{\Rs{\pi}}{A})_{\subind^{\pi}}$ and   $(\ldiv{\pi}{A})_{\subind^{\pi}}$ both equal
the configuration \AI{}--in other words, the relative positions of these entries do not change, even though this
matrix may be embedded in the corresponding larger matrix \bprod{\Rs{\pi}}{A} \resp{\ldiv{\pi}{A}} in different ways.

\subsubsection{Multiplication and division by transpositions}\label{subsubsec:divtran}
Any permutation can be expressed as a product of transpositions of adjacent positions.  We denote by
$\boldsymbol{\taus{i}}\in\SymGp{N}$  ($1\leq i<n$) the permutation which interchanges positions $i$ and $i+1$
and leaves every other position alone.  Its crossing matrix $\boldsymbol{\ts{i}}=\Rs{\taus{i}}$ is the matrix with $1$
in position $i,i+1$ and zero everywhere else. Note that \taus{i} is its own inverse.%
\footnote{Caution: this is not true of the corresponding permutation braid.} 

\textit{Multiplication by \taus{i}}: Given $A\in\SRp{N}$, \bprod{\ts{i}}{A} has rows \resp{columns} $i$ and $i+1$ interchanged, in
particular the \deffont{subdiagonal} entry of \bprod{\ts{i}}{A} in position $i+1,i$ is the same as the \deffont{superdiagonal}
 entry at $i,i+1$ in A, while 
the $i,i+1$ entry of \bprod{\ts{i}}{A} is one more than the $i+1,i$ entry of $A$.  In terms of the $SR$ tableau, 
again aside from the $i,i+1$ entry, multiplication by \ts{i} interchanges rows $i$ and $i+1$ with each other, and columns $i$ and $i+1$ with each other.
In position $i,i+1$, a zero \resp{R} in $A$ becomes an R \resp{S} in \bprod{\taus{i}}{A}.

\textit{Division} by \taus{i}:
The transposition \taus{i} is subordinate to the matrix $A\in\SRp{N}$ if and only if the entry \as{i,i+1} on the first superdiagonal of $A$
is nonzero, and in this case  left division by \taus{i} also interchanges rows \resp{columns}  $i$ and
$i+1$ except that the (subdiagonal) $i+1,i$ entry of \ldiv{\taus{i}}{A} is one less than the  (superdiagonal) $i,i+1$ entry of $A$.
In terms of the $SR$ tableau, division by \taus{i} also interchanges the $i^{th}$ an $(i+1)^{st}$ rows \resp{columns} of A  
and, in the $i,i+1$ position of the tableau, changes an S to an R or an R to a zero.
                                                              
\subsubsection{Mirror symmetry}\label{subsubsec:symm}
It will  simplify some arguments to note that the symmetry on \sqsize{m} matrices 
defined by
\begin{equation*}
	(\mir{A})_{i,j}\eqdef A_{(m+1-i),(m+1-j)}
\end{equation*} 
preserves realizability.  
This consists of reversing 
the order of the rows (and columns): it is the analogue of transpose, but ``flips''
the matrix about its antidiagonal instead of its diagonal.  
We will refer to \mir{A} as the ``mirror'' of $A$.  
It is useful to have this operation not just for the full \sqsize{N} matrices, but for their 
sub matrices as well (which is why it is defined above for \sqsize{m} instead of just \sqsize{N} 
matrices).

 If $M=C(b)$, then $\mir{M}=C(b^{\star})$, where
$b^{\star}$ is the geometric braid obtained by looking at $b$ from ``behind'';  equivalently,
 $b^{\star}$ is obtained from
$b$ by numbering the strands right-to-left instead of left-to-right.
(Note that a positive
crossing remains positive if viewed from ``behind''.)
Thus, a matrix $A\in\SRp{N}$ is realizable as the crossing matrix of a (positive) braid if and only if \mir{A} is.

\subsubsection{Creating $T0$ violations}\label{subsubsec:createT0viol}

We want to understand how left division of $A\in\SRp{N}$ by a transposition $\pi=\taus{i}$ subordinate to $A$ results in 
$\ldiv{\pi}{A}\not\in\SRp{N}$, a situation which we have seen can only happen if $B=\ldiv{\pi}{A}$ fails to be $T0$.  
A \textbf{violation of $\boldsymbol{T0}$ }
means a \sqsize{3} configuration in $B$ of the form
\begin{equation*}
	\Bs{(\subind)^{\piI}}
	=\left\lbrack\begin{array}{ccc}
		0 & 0 & b \\
		a & 0 & 0 \\
		\bp & \ap & 0
	\end{array}\right\rbrack
\end{equation*}
with $b\neq0$. 
If $\piof{\subind}=\single{\ps{1},\ps{2},\ps{3}}$ with $\ps{1}<\ps{2}<\ps{3}$ and $A\in\SRp{N}$ (so that the corresponding 
configuration \As{\subind} is different from $\Bs{\piof{\subind}}$), then $\pi$, if it is a transposition, must interchange 
either \ps{1} and \ps{2}, or \ps{2} and \ps{3}.  This requires that the interchanged pair of indices be adjacent.  In short,
there are only two possible ways that  left dividing $A$ by a subordinate transposition can change \As{\subind}: either 
$\piI=\taus{1}$ (and $\ps{2}=\ps{1}+1$) or $\piI=\taus{2}$ (and $\ps{3}=\ps{2}+1$).  Notice that these two situations are
mirrors of each other, so we can concentrate on the case that $\piI=\taus{1}$ (and $\ps{2}=\ps{1}+1$).  Then it follows
from Remark~\ref{rmk:permsubmat} that 
\begin{equation*}
	\As{\subind} = \bprod{\taus{1}}{\Bs{\piof{\subind}}}
	=\left[\begin{array}{ccc}
		0 & 1 & 0 \\
		0 & 0 & 0 \\
		0 & 0 & 0
	\end{array}\right]
	\bmult
	\left\lbrack\begin{array}{ccc}
		0 & 0 & b \\
		a & 0 & 0 \\
		\bp & \ap & 0
	\end{array}\right\rbrack
	=
	\left[\begin{array}{ccc}
		0 & a+1 & 0 \\
		0 & 0 & b \\
		\ap & \bp & 0
		\end{array}\right].
\end{equation*}
Since $A$ (and hence \As{\subind}) has an $SR$ decomposition, we must have $a=\ap=0$ and $b=\bp$ or  $\bp+1$;
thus
\begin{equation*}
	\As{\subind}=
	\left[\begin{array}{ccc}
		0 & 1 & 0 \\
		0 & 0 & b \\
		0 & \bp & 0
	\end{array}\right]
\end{equation*}
with $b=\bp$ or  $b=\bp+1$.  This means the $SR$ tableau of \As{\subind} is one of two possibilities:
\begin{equation*}
	\SRT{\As{\subind}}=\threeConfig{\ps{1}}{\ps{2}}{\ps{3}}{R}{0}{bS}
		\quad\text{or}\quad
		\threeConfig{\ps{1}}{\ps{2}}{\ps{3}}{R}{0}{bS+R}
\end{equation*}
But the second tableau violates the requirement that the R-matrix in the $SR$ decomposition must be $T1$, so only the first
can belong to \SRp{N}.

We therefore adopt the following 
\begin{definition}\label{dfn:blockage}
	A \textbf{blockage} in the matrix $A\in\SRp{N}$ is a \sqsize{3} configuration of the form
	\begin{align*}
		\SRT{\AI}&=\threeConfig{i}{j}{k}{R}{0}{bS}\\
		\intertext{or its mirror image}\\
		\SRT{\AI}&=\threeConfig{i}{j}{k}{bS}{0}{R}.
	\end{align*}
	with $b\neq0$.
	In either of these situations, we say that the ``R'' entry is \textbf{blocked} (by the ``S'' entry) .
\end{definition}

The preceding discussion shows that division of a matrix in \SRp{N} by a subordinate transposition results in a $T0$ violation
precisely if that transposition corresponds to a ``blocked R'' in the matrix:
 
\begin{prop}[Division by a Blocked ``R'']\label{prop:blockage}
	If $B=\ldiv{\tau}{A}\not\in\SRp{N}$ where $A\in\SRp{N}$ and $\tau\in\SymGp{N}$ is a transposition subordinate to $A$, 
	then $A$ has a blockage 
	\AI{} whose ``R'' entry lies on the first superdiagonal of $A$, at the position corresponding to $\tau$: either
	\begin{enumerate}
		\item $\tau=\taus{i}$, $j=i+1$ and
		\begin{equation*}
			\SRT{\AI}=\threeConfig{i}{i+1}{k}{R}{0}{bS},
		\end{equation*}
		or
		\item $\tau=\taus{j}$, $k=j+1$ and
		\begin{equation*}
			\SRT{\AI}=\threeConfig{i}{j}{j+1}{bS}{0}{R}
		\end{equation*}.
	\end{enumerate}
	
	In either case, the resulting $T0$ violation is
	\begin{equation*}
			\SRT{\Bs{\subind}}=\threeConfig{i}{j}{k}{0}{bS}{0}.
	\end{equation*}
\end{prop}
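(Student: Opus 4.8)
The plan is to make precise the sketch immediately preceding the statement, in three movements: isolate a single $3\times 3$ violation of $T0$ in $B$, identify the configuration of $A$ that maps onto it, and pull that violation back through $\tau$ while using the $SR$ decomposition of $A$ to force its shape.

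As throughout \S\ref{sec:CMP}, $\tau$ is an adjacent transposition, say $\tau=\taus{i}$, and is its own inverse, so $B=\ldiv{\tau}{A}=(A-\Rs{\tau})^{\taus{i}}$. Since $\tau$ is subordinate, $A-\Rs{\tau}\geq 0$ entrywise, whence $B$ has non-negative entries and, by the description of division by a transposition in \S\ref{subsubsec:divtran}, a (formal) $SR$ decomposition $B=S+R$ with $S$ symmetric and non-negative. So, as recorded in \S\ref{subsec:multbyperm}, the only obstruction left to $B\in\SRp{N}$ is the failure of $T0$ for $B$: there are indices $\ps{1}<\ps{2}<\ps{3}$ with $B_{\ps{1}\ps{2}}=B_{\ps{2}\ps{3}}=0$ but $B_{\ps{1}\ps{3}}\neq 0$. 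Since $B$ has an $SR$ decomposition with non-negative symmetric part, a vanishing above-diagonal entry forces its mirror twin to vanish, so
\begin{equation*}
	\Bs{\single{\ps{1},\ps{2},\ps{3}}}=\left[\begin{array}{ccc}0&0&b\\ a&0&0\\ \bp&\ap&0\end{array}\right],\qquad b\neq 0,
\end{equation*}
for some non-negative $a,\ap,\bp$.

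Next I would set $\subind=\tau(\single{\ps{1},\ps{2},\ps{3}})$, so $\tau(\subind)=\single{\ps{1},\ps{2},\ps{3}}$, and invoke Remark~\ref{rmk:permsubmat} (tracing the configuration through the multiplication $A=\bprod{\Rs{\tau}}{B}$, via Equation~\ref{eqn:configprod}): $\AI=\bprod{\Rs{\piI}}{\Bs{\single{\ps{1},\ps{2},\ps{3}}}}$, where $\piI$ is the restriction of $\tau$ to $\subind$. If $\single{i,i+1}\not\subset\single{\ps{1},\ps{2},\ps{3}}$, then $\OR{\piI}=\OR{\tau}\cap(\subind\times\subind)=\emptyset$, so $\piI$ is the identity and $\AI=\Bs{\single{\ps{1},\ps{2},\ps{3}}}$ already violates $T0$, contradicting $A\in\SRp{N}$. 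Hence $\single{i,i+1}\subset\single{\ps{1},\ps{2},\ps{3}}$, and the only possibilities are $(\ps{1},\ps{2})=(i,i+1)$ with $\piI=\taus{1}$, or $(\ps{2},\ps{3})=(i,i+1)$ with $\piI=\taus{2}$; these are interchanged by the mirror operation $A\mapsto\mir{A}$ of \S\ref{subsubsec:symm} (which carries $\taus{1}$ to $\taus{2}$ on a $3\times 3$ configuration), so it suffices to treat $\piI=\taus{1}$. There $\bprod{\Rs{\taus{1}}}{(\cdot)}=\bprod{\ts{1}}{(\cdot)}$ gives
\begin{equation*}
	\AI=\left[\begin{array}{ccc}0&a+1&0\\ 0&0&b\\ \ap&\bp&0\end{array}\right].
\end{equation*}

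Finally, $\AI\in\SRp{3}$ (because $A\in\SRp{N}$), so each above-diagonal entry minus its twin lies in $\single{0,1}$; with non-negativity this forces $a=\ap=0$ and $b-\bp\in\single{0,1}$. The case $b=\bp+1$ is impossible: then the $R$-part of $\AI$ would carry $1$'s at $(1,2)$ and $(2,3)$ but a $0$ at $(1,3)$, violating the $T1$ condition on the $R$-matrix of an $SR$ decomposition. Hence $b=\bp\neq 0$, and with $k:=\ps{3}$ the $SR$ tableau of $\AI$ is precisely the blockage $\threeConfig{i}{i+1}{k}{R}{0}{bS}$ of Definition~\ref{dfn:blockage}, whose ``R'' occupies the first-superdiagonal slot $(i,i+1)$ attached to $\tau=\taus{i}$; this is case~(1). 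Dividing this configuration by $\taus{i}$ (Equation~\ref{eqn:configldiv}) returns $\Bs{\single{\ps{1},\ps{2},\ps{3}}}$ with $SR$ tableau $\threeConfig{i}{i+1}{k}{0}{bS}{0}$, the stated $T0$ violation; applying the same argument to $\mir{A}$ gives case~(2). The step I expect to be the main nuisance --- fiddly rather than deep --- is this second movement: keeping the image index set $\tau(\subind)$ separate from the internal reordering $\piI$ when applying Remark~\ref{rmk:permsubmat}, and being sure that the reduction ``$B\notin\SRp{N}\Rightarrow B$ violates $T0$'' is legitimately inherited from \S\ref{subsec:multbyperm} --- which it is, since once $\tau$ is subordinate $B$ automatically has an $SR$ decomposition, as the tableau description in \S\ref{subsubsec:divtran} shows. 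Everything else is the arithmetic of the two displayed $3\times 3$ products.
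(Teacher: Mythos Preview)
Your argument is correct and is essentially a careful write-up of the paper's own derivation in \S\ref{subsubsec:createT0viol}: locate a $T0$ violation in $B$, note that $\tau$ must touch two consecutive indices of the offending triple (else the configuration passes unchanged to $A$), reduce by mirror symmetry to $\piI=\taus{1}$, compute $\AI=\bprod{\ts{1}}{B_{\{p_1,p_2,p_3\}}}$, and use the $SR$ constraints on $\AI$ (non-negativity, Proposition~\ref{prop:basicchar}(2), and $T1$ on the $R$-part) to force $a=\ap=0$, $b=\bp$. The one place where your justification is a shade looser than it could be is the claim that $B$ automatically has an $SR$ decomposition once $\tau$ is subordinate: the tableau description in \S\ref{subsubsec:divtran} shows that entries rearrange correctly, but the cleanest reason the resulting $R$-part is an honest $R$-matrix is that $A\in\SR{N}=\Cross{\Br{N}}$ (Theorem~\ref{thm:CofB}) and hence $B=\Cross{\invbar{(\pbraid{\tau})}\,b}\in\SR{N}$ as well---exactly the reduction the paper makes in \S\ref{subsec:multbyperm} when it says ``the second requirement is that $B$ be $T0$.''
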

 
Proposition~\ref{prop:blockage} can also be understood geometrically, in terms of possible realizations of the matrix. 
Suppose $A\in\SRp{N}$ has a blockage, not necessarily embedded with the ``$R$'' on the first superdiagonal of $A$.  
Then, in any possible realization of $A$ as the crossing matrix of a positive braid,
the subbraid corresponding to this configuration consists of three strands, with the middle strand ``hooking'' one of the outer 
strands but crossing the other outer strand only once.  In such a (sub)braid, all the ``hooks'' must precede the crossing, because 
once the crossing has occurred, the formerly outside strand separates the formerly middle strand from the other outer strand
(see Figure~\ref{fig:blockage}).  

In the situation where the ``R'' \emph{does} lie on the first superdiagonal, this reasoning shows
that attempting at that stage to introduce the transposition corresponding to that ``R'' cannot lead to a realization of the matrix. 
However, if via other divisions we can move the blockage so that the ``S'' entry is on the first superdiagonal, then we can 
divide by that transposition, thereby destroying the blockage, and continue.

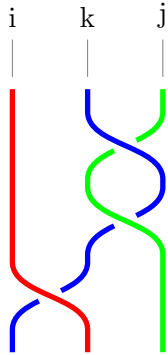
\begin{figure}[h]
\begin{center}
\begin{tikzpicture}
	\braid[
	line width=2pt,
	style strands={1}{red},
	style strands={2}{blue},
	style strands={3}{green},
	]
	(braid) at (2,0) s_{2} s_{2} s_{1} ;
	\node[at=(braid-1-s),pin=north : i] {};
	\node[at=(braid-2-s),pin=north : k] {};
	\node[at=(braid-3-s),pin=north : j] {};

\end{tikzpicture}

\caption{Geometric Interpretation of a Blockage}
\label{fig:blockage}
\end{center}
\end{figure}


The two examples in Subsection~\ref{sec:examples} both have the property that 
\emph{every nonzero entry in the first superdiagonal is a blocked ``$R$''}: the configurations \Gs{124}, \Gs{134}, \Ks{235} and
\Ks{134} are all blockages whose ``$R$'' is embedded in the first superdiagonal of the respective matrix.  Therefore, it is 
impossible to factor either of these matrices into permutation matrices--or equivalently, neither matrix is the crossing matrix 
of any positive braid.  We call a matrix $A\in\SRp{N}$ with this property a \deffont{totally blocked} matrix: 
it is impossible to left divide it by a transposition subordinate to the matrix (see below).

\begin{remark}\label{rmk:totallyblocked}
	If $A\in\SRp{N}$ is totally blocked--that is, every nonzero entry in the first superdiagonal of $A$ is a blocked ``$R$''--
	then $A $ is not the crossing matrix of any positive braid.
\end{remark}

As we pointed out above, the presence of a blockage somewhere in $A\in\SRp{N}$ 
does not \textit{a priori} mean that $A$ is not the crossing
matrix of some positive braid--in fact, there are factorizations (equivalently, sequences of left divisions by transpositions)
which create blockages, but then these blockages move around the matrix until they land with the ``$S$'' term on the first 
superdiagonal.  

When the ``S'' entry lies on the first superdiagonal, division by the transposition subordinate to it yields a
configuration of the form
\begin{equation*}
	\threeConfig{i}{j}{j+1}{0}{R}{R+(b-1)S}
\end{equation*}
(or its mirror image) and a further division by the same transposition yields
\begin{equation*}
	\threeConfig{i}{j}{j+1}{R}{0}{(b-1)S}
\end{equation*}
\resp{its mirror image};  repeating this process $2(b-1)$ more times ends in the configuration
\begin{equation*}
	\threeConfig{i}{j}{j+1}{0}{R}{R}
\end{equation*}
thus eliminating the blockage.

However, a matrix $A\in\SRp{N}$ need not be totally blocked to fail to be a crossing matrix of some positive braid.
Consider for example the \sqsize{6}  matrix
\begin{equation*}
	V=
	\left[\begin{array}{cccccc}
		0 & 1 & 1 & 1 & 0 & 0\\
		1 & 0 & 1 & 0 & 1 & 0\\
		0 & 0 & 0 & 0 & 1 & 1\\
		1 & 0 & 0 & 0 & 1 & 1 \\
		0 & 0 & 1 & 0 & 0 & 0\\
		0 & 0 & 1 & 1 & 0 & 0
	\end{array}\right]
\end{equation*}
with $SR$ tableau
\begin{equation*}
	\SRT{V}=
	\begin{array}{cccccc}
		\hline
		\multicolumn{1}{|c}{\labbox{1}} & 
		\multicolumn{1}{|c}{S} & 
		\multicolumn{1}{|c}{R} & 
		\multicolumn{1}{|c}{S} &
		\multicolumn{1}{|c}{0} &
		\multicolumn{1}{|c|}{0}
		\\\hline 
	& 
		\multicolumn{1}{|c}{\labbox{2}} & 
		\multicolumn{1}{|c}{R} & 
		\multicolumn{1}{|c}{0} &
		\multicolumn{1}{|c}{R} &
		\multicolumn{1}{|c|}{0} 
		\\\cline{2-6}
	& &
		\multicolumn{1}{|c}{\labbox{3}} & 
		\multicolumn{1}{|c}{0} & 
		\multicolumn{1}{|c}{S} &
		\multicolumn{1}{|c|}{S} 
		\\\cline{3-6}
	& &&
		\multicolumn{1}{|c}{\labbox{4}} & 
		\multicolumn{1}{|c}{R} & 
		\multicolumn{1}{|c|}{S} 
		\\\cline{4-6}
	& &&&
		\multicolumn{1}{|c}{\labbox{5}} & 
		\multicolumn{1}{|c|}{0}  
		\\\cline{5-6}
	& & & &&
		\multicolumn{1}{|c|}{\labbox{6}} 
		\\\cline{6-6}
	\end{array}\quad.%
\end{equation*}
The two ``$R$'''s on the first superdiagonal are blocked (\Vs{236}, \Vs{145}).  
Thus the only allowed left division is by \taus{1}, and this results in 
\begin{equation*}
	\SRT{V1=\ldiv{\taus{1}}{V}}=
	\begin{array}{cccccc}
		\hline
		\multicolumn{1}{|c}{\labbox{1}} & 
		\multicolumn{1}{|c}{R} & 
		\multicolumn{1}{|c}{R} & 
		\multicolumn{1}{|c}{0} &
		\multicolumn{1}{|c}{R} &
		\multicolumn{1}{|c|}{0}
		\\\hline 
	& 
		\multicolumn{1}{|c}{\labbox{2}} & 
		\multicolumn{1}{|c}{R} & 
		\multicolumn{1}{|c}{S} &
		\multicolumn{1}{|c}{0} &
		\multicolumn{1}{|c|}{0} 
		\\\cline{2-6}
	& &
		\multicolumn{1}{|c}{\labbox{3}} & 
		\multicolumn{1}{|c}{0} & 
		\multicolumn{1}{|c}{S} &
		\multicolumn{1}{|c|}{S} 
		\\\cline{3-6}
	& &&
		\multicolumn{1}{|c}{\labbox{4}} & 
		\multicolumn{1}{|c}{R} & 
		\multicolumn{1}{|c|}{S} 
		\\\cline{4-6}
	& &&&
		\multicolumn{1}{|c}{\labbox{5}} & 
		\multicolumn{1}{|c|}{0}  
		\\\cline{5-6}
	& & & &&
		\multicolumn{1}{|c|}{\labbox{6}} 
		\\\cline{6-6}
	\end{array} .%
\end{equation*}
which is totally blocked (the ``S'' in position $2,4$ blocks the ``$R$'''s in positions $1,2$ and $4,5$, 
while the ``$S$'' in position $3,5$ blocks the ``$R$'' in position $2,3$).  
One might say that the original $V$ (as well as $V1$) is \deffont{virtually totally blocked}--every sequence 
of allowed left divisions by transpositions eventually leads to a totally blocked matrix. Such a matrix clearly cannot represent a positive braid.

%
%
This leads to a brute-force scheme for determining whether 
a given matrix $A\in\SRp{N}$ is the crossing matrix of some positive braid: 
quite simply, one tries every allowable sequence of left divisions by transpositions until one reaches either the zero matrix (in which
case the original matrix has been factored into $R$-matrices, which exhibits a realization of $A$) or one reaches a totally 
blocked matrix.  Since left division by a transposition subordinate to $A\in\SRp{N}$ 
reduces $N(A)$, the sum of the entries of $A$, by $1$, 
any string of allowable left divisions will terminate in one of these two possibilities after at most  $N(A)$ steps.  Of course the 
number of allowable sequences of left divisions is \textit{a priori} on the order of $N(A)!$, but we have implemented this scheme
in Mathematica code which can handle matrices of size up to about \sqsize{7} on a MacBook Pro with 8GB of memory.%
\footnote{We record our deep thanks to our colleague Bruce Boghosian, who spent many hours helping us develop this code,
as well as our former student Dan Fortunato, who helped us at the beginning of this process.}

Of course, this scheme provides an algorithmic characterization of crossing matrices for positive braids:
\begin{theorem}\label{thm:virtual}
	The crossing matrix of any positive braid on $N$ strands belongs to \SRp{N}: it is a non-negative integer matrix with 
	an $SR$ decompostion.  
	
	Conversely, every such matrix $A\in\SR{N}$ is either the crossing matrix of some (perhaps 
	many) positive braid on $N$ strands, or it is virtually totally blocked (\textit{i.e.,} every sequence of left divisions by
	transpositions corresponding to positions in the first superdiagonals of successive left quotients terminates in a totally
	blocked matrix).
\end{theorem}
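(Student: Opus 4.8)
The plan is to show that the brute-force search described above is a genuine (terminating) algorithm whose output is exactly the claimed dichotomy, by stitching together results already in hand. The first assertion is immediate: a positive braid is a positive word in the standard generators, so by Remark~\ref{rmk:factoring} its crossing matrix is a $\bmult$-product of $R$-matrices; such a matrix has non-negative entries and is $T0$ by Remark~\ref{rmk:T0}, and it has an $SR$ decomposition by Theorem~\ref{thm:CofB}, hence lies in $\SRp{N}$.

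For the converse I would fix $A\in\SRp{N}$ and first isolate the key equivalence: $A$ is the crossing matrix of a positive braid if and only if some finite sequence of \emph{legal} left divisions carries $A$ to the zero matrix, where a legal left division means dividing the current matrix by a transposition $\taus{i}$ that is subordinate to it (equivalently, whose $(i,i+1)$-entry, on the first superdiagonal, is nonzero) and for which the quotient again lies in $\SRp{N}$. The ``if'' half is a computation: writing $B\mapsto\ldiv{\taus{i}}{B}=(B-\Rs{\taus{i}})^{\invbar{\taus{i}}}$ and using $\ldiv{\pi}{(\bprod{\Rs{\pi}}{C})}=C$, a sequence of legal divisions ending in $0$ unwinds to an identity $A=\Rs{\taus{i_1}}\bmult\cdots\bmult\Rs{\taus{i_m}}$, so Remark~\ref{rmk:factoring} applies. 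The ``only if'' half is an induction on word length: if $A=\Cross{\sigma_{i_1}\cdots\sigma_{i_m}}$ then $A=\bprod{\Rs{\taus{i_1}}}{A'}$ with $A'=\Cross{\sigma_{i_2}\cdots\sigma_{i_m}}\in\SRp{N}$; since the entries of $A'$, and hence of $(A')^{\taus{i_1}}$, are non-negative we get $\Rs{\taus{i_1}}\le A$ entrywise, so $\taus{i_1}$ is subordinate to $A$ and $\ldiv{\taus{i_1}}{A}=A'$ is again in $\SRp{N}$ — a legal step — after which the inductive hypothesis applied to $A'$ finishes the job.

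Next I would record that every sequence of legal left divisions terminates and classify the terminal matrices. Termination is the entry-sum count already in the text: a legal division subtracts $\Rs{\taus{i}}$ (removing $1$) and then permutes rows and columns (preserving the sum), so $N(A)=\sum_{i,j}\As{ij}$ drops by exactly $1$ each step, bounding the length of any sequence by $N(A)$. A terminal matrix $B\in\SRp{N}$ is one admitting no legal left division by a first-superdiagonal transposition. If $B\ne0$ then the difference condition yields a nonzero entry strictly above the diagonal, and $T0$, applied to a closest-to-the-diagonal such entry, forces one on the first superdiagonal itself; for each such entry $b_{i,i+1}$ the division by $\taus{i}$ is subordinate but not legal, so by Proposition~\ref{prop:blockage} that entry must be a blocked ``$R$'', i.e., $B$ is totally blocked. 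Feeding this back into the equivalence gives the theorem: if some sequence reaches $0$ then $A$ is the crossing matrix of a positive braid; otherwise every sequence terminates at a (nonzero) totally blocked matrix, so $A$ is virtually totally blocked, and the ``only if'' half of the equivalence shows it is then not the crossing matrix of any positive braid (which also makes the two alternatives mutually exclusive).

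The step I expect to be the main obstacle is the ``only if'' half of the equivalence: one must check simultaneously that peeling the leftmost generator off a positive word realizing $A$ corresponds to a transposition that is subordinate to $A$, that the resulting quotient still lies in $\SRp{N}$ (which holds because it is itself the crossing matrix of a positive braid), and that this step is of the restricted ``first-superdiagonal transposition'' type the search considers — so that a realization really does lie on some branch of the search tree. Everything else — termination via the entry-sum count, and the identification of terminal matrices as the totally blocked ones — is a short deduction from Proposition~\ref{prop:blockage}; indeed the genuinely hard analysis (how divisions create and move blockages) was carried out before the theorem statement, so the proof here is essentially an assembly and bookkeeping argument.
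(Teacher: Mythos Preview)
Your proposal is correct and follows exactly the approach the paper takes: the paper offers no formal proof of this theorem, presenting it instead as a summary of the brute-force scheme just described (termination via the entry-sum $N(A)$, and Proposition~\ref{prop:blockage} to identify the terminal matrices), so your write-up simply makes explicit the details the paper leaves to the reader. In particular, your observation that a nonzero $T0$ matrix in $\SRp{N}$ must have a nonzero first-superdiagonal entry, and your ``only if'' direction via peeling off the leftmost generator of a positive word, are precisely the small gaps the paper's informal discussion glosses over.
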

However, this fails to be the kind of conceptual characterization of such crossing matrices which  we would like to see: 
a criterion which
can be applied directly to a matrix without an exhaustive search through allowable sequences of left divisions.  We have not
succeeded in formulating even a conjectural version of such a criterion.

\section{The realizability problem for symmetric matrices}\label{sec:positive}
The fact that the crossing product operation \bmult{} is simply matrix addition when restricted to \SymMat{N} 
makes it easier to think about realizability in this case--for example, 
\begin{remark}\label{rmk:sum}
	A sum of  realizable symmetric matrices is automatically
	realizable as a composition of the individual realizations. 
\end{remark} 

Also, we note that for any $R$-matrix $R$ (which by the discussion in Subsection~\ref{subsec:OR} is the crossing
matrix  \Rs{\pi} of some permutation $\pi\in \SymGp{N}$) the symmetrization \Ssub{\pi} of $R$ 
is realized by $\pbraid{\pi}\pbraid{(\invbar{\pi})}$.  Combined with Remark~\ref{rmk:sum} this shows
\begin{remark}\label{rmk:T1}
	If $S\in\SymMatp{N}$ is $T0$ and $T1$, it is (positively) realizable.
\end{remark} 

Using these observations together with a case-by-case argument, we can show that for $N=4$,
any $T0$ matrix in \SymMatp{N} is (positively) realizable;  
\begin{theorem}\label{thm:T04}
	A symmetric \sqsize{4} non-negative integer matrix with zero diagonal is positively realizable if and only if it is $T0$.
\end{theorem}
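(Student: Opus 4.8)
The necessity of the $T0$ condition is exactly the content of Remark~\ref{rmk:T0}, so the whole of the theorem is the converse: every symmetric non-negative integer \sqsize{4} matrix $S$ with zero diagonal which is $T0$ is positively realizable. The first observation is that the $T0$ property (and the $T1$ property) of a symmetric matrix depends only on its \emph{support}, the graph $G=G(S)=\setbld{(i,j)}{i<j\text{ and }\ssub{ij}\neq0}$ on the vertex set $\single{1,2,3,4}$, so the plan is to organize everything around $G$. The two tools to exploit are Remark~\ref{rmk:T1} -- any $S'\in\SymMatp{4}$ that is both $T0$ and $T1$ is positively realizable -- and Remark~\ref{rmk:sum} -- a sum of positively realizable symmetric matrices is positively realizable, because their realizations are pure braids and the crossing product of pure braids is ordinary matrix addition. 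Combining these, it suffices to write $S$ as a sum of symmetric non-negative matrices each of which is $T0$ \emph{and} $T1$.

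The heart of the matter is then a purely combinatorial claim: \emph{the edge set of every $T0$ graph $G$ on $\single{1,2,3,4}$ can be partitioned into blocks $E_1,\dots,E_r$, each of which (viewed as a graph on $\single{1,2,3,4}$) is both $T0$ and $T1$.} Granting this, let $S^{(\ell)}$ be the matrix agreeing with $S$ on the pairs in $E_\ell$ and zero elsewhere; since $E_\ell\subseteq G(S)$, the support of $S^{(\ell)}$ is precisely $E_\ell$, so $S^{(\ell)}$ is a symmetric non-negative $T0$-and-$T1$ matrix, positively realizable by Remark~\ref{rmk:T1}, and then $S=S^{(1)}+\dots+S^{(r)}$ is positively realizable by repeated use of Remark~\ref{rmk:sum}. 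This finishes the theorem.

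To prove the combinatorial claim I would run through the $T0$ graphs on four vertices, using the mirror map $i\mapsto 5-i$ -- which preserves both the $T0$ and the $T1$ property of a graph and carries partitions to partitions -- to roughly halve the list. If $G$ is itself $T1$, the single block $E_1=G$ works. Otherwise $G$ carries a $T1$-violation, i.e.\ a triple $i<k<j$ with $(i,k),(k,j)\in G$ but $(i,j)\notin G$; up to the mirror symmetry these are only a handful of graphs -- the paths $1$--$2$--$3$ and $1$--$2$--$3$--$4$, the star with edges $(1,2),(2,3),(2,4)$, the $4$-cycles $1$--$2$--$3$--$4$--$1$ and $1$--$2$--$4$--$3$--$1$, the triangle on $\single{1,2,3}$ together with the edge $(2,4)$, and the complete graph $K_4$ with one of the edges $(1,3)$ or $(1,4)$ removed -- and I would exhibit an explicit partition for each. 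Typical cases: the path $1$--$2$--$3$--$4$ splits into its three single edges; the $4$-cycle $1$--$2$--$3$--$4$--$1$ splits into $\single{(2,3)}$ and the block $\single{(1,2),(1,4),(3,4)}$; the $4$-cycle $1$--$2$--$4$--$3$--$1$, with edges $(1,2),(1,3),(2,4),(3,4)$, splits into $\single{(1,2),(1,3)}$ and $\single{(2,4),(3,4)}$; the complete graph minus $(1,3)$ splits into $\single{(1,2)}$ and $\single{(1,4),(2,3),(2,4),(3,4)}$. (One uses here that a single edge $(i,i+1)$ between adjacent indices is $T0$ and $T1$, whereas a lone non-adjacent edge $(1,3)$, $(1,4)$ or $(2,4)$ already fails $T0$, which is exactly why such an edge must always travel in a block with one of its ``helper'' edges.)

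The step I expect to be the main obstacle is precisely this last verification -- producing a $T0$-and-$T1$ partition in every case. The most natural attempted reduction, peeling off repeatedly a hook $\Ssub{i,i+1}$ lying on the first superdiagonal (these are positively realizable) while keeping the remainder $T0$, does \emph{not} always succeed: for $S$ with support $\single{(1,2),(1,3),(2,4),(3,4)}$, deleting the superdiagonal entry $\ssub{12}$ destroys $T0$ (the edge $(1,3)$ loses its only helper), deleting $\ssub{34}$ destroys $T0$ (the edge $(2,4)$ loses its only helper), and $\ssub{23}=0$, so there is no admissible first-superdiagonal move at all. This is what forces the looser ``partition into $T0$-and-$T1$ blocks'' formulation, in which a two-edge block such as $\single{(1,2),(1,3)}$ may be peeled off in a single stroke. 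Once the small finite list of partitions has been checked, the mirror reduction together with Remarks~\ref{rmk:T1} and~\ref{rmk:sum} deliver the theorem.
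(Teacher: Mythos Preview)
Your approach is correct and is essentially the one the paper has in mind: the paper's own proof is only the one-line sketch ``using these observations [Remarks~\ref{rmk:sum} and~\ref{rmk:T1}] together with a case-by-case argument'', and your support-graph partition is precisely the natural way to organize that case analysis. Your enumeration of $T0$-but-not-$T1$ supports is illustrative rather than complete (for instance the triangle on $\{1,2,3\}$ together with the edge $(3,4)$, or equivalently its mirror $\{(1,2),(2,3),(2,4),(3,4)\}$, is missing), but each of the finitely many omitted cases admits an obvious partition by the same method, so this is bookkeeping rather than a gap.
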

It is natural, based on this and our experimental evidence using the algorithm resulting from Theorem~\ref{thm:virtual}, that Theorem~\ref{thm:T04} extends to all $N$. 

An approach to trying to prove the analogue of Theorem~\ref{thm:T04} for all $N$ might be via an induction on the number of 
nonzero entries in the matrix, using Theorem~\ref{thm:T04} to establish an initial case, and then the following idea,
which we find plausible but have not succeeded in proving or disproving.  We call an entry \as{ij} of the $T0$ matrix $A$
\deffont{fully supported} (even if that entry is zero) if, for every $k$ between $i$ and $j$, 
at least one of the entries \as{ik} and \as{kj}
is nonzero.

\begin{conjecture}\label{conj:fullsupport}
	Suppose $A$ is a positively realizable symmetric matrix, and $\as{ij}=0$ but the position is fully supported in $A$.
	Then there is some realization of $A$ in which strands $i$ and $j$ become adjacent somewhere, so that changing
	\as{ij} from zero to one results in a positively realizable matrix. 
\end{conjecture}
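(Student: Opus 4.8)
The plan is to reduce the statement to a purely geometric claim about realizations and then attack that claim by surgery on braids; I expect the surgery step to be the real obstacle. First I would note that any braid realizing the symmetric matrix $A$ is pure (by Equation~\ref{eqn:perm} the permutation attached to a symmetric matrix is the identity), and that, since $a_{ij}=0$, strand $i$ never crosses strand $j$, so --- as $i<j$ --- strand $i$ stays to the left of strand $j$ throughout any realization. Now suppose we have a positive braid $b$ with $\Cross{b}=A$ in which strands $i$ and $j$ occupy adjacent positions $p$ and $p+1$ at some moment (strand $i$ at $p$). Splicing in the ``hook'' $\sigma_p\sigma_p$ there --- strand $i$ crosses over strand $j$, then strand $j$ crosses back over strand $i$ --- produces a positive braid in which strands $i$ and $j$ return to their former positions, so the rest of $b$ is untouched and its crossing matrix is $A$ with the $(i,j)$ and $(j,i)$ entries each raised from $0$ to $1$ --- precisely the matrix obtained by changing $a_{ij}$ (hence also $a_{ji}$) to $1$. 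So it suffices to prove:
\begin{quote}
	$(\star)$: if $A$ is a positively realizable symmetric matrix with $a_{ij}=0$ and $(i,j)$ fully supported, then some positive braid $b$ with $\Cross{b}=A$ has strands $i$ and $j$ in adjacent positions at some moment.
\end{quote}

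Call the open region between the graphs of $\mathrm{pos}_t(i)$ and $\mathrm{pos}_t(j)$ the \emph{gap}. The role of full support is to guarantee that each potentially obstructing strand can be kept out of the gap for a while. Indeed, for a label $k$ with $i<k<j$ (such a strand starts, and ends, inside the gap), full support gives $a_{ik}\neq 0$ or $a_{kj}\neq 0$; since strand $k$ begins and ends on a fixed side of strand $i$ and of strand $j$, such a nonzero entry forces at least two crossings with the relevant strand, hence a nondegenerate time interval during which strand $k$ lies left of strand $i$ or right of strand $j$, i.e.\ outside the gap. A label $k<i$ with $a_{ik}=0$ stays permanently left of strand $i$, hence never in the gap; a label $k<i$ with $a_{ik}\neq 0$ may visit the gap but still spends an initial interval $[0,t_k)$ outside it (until its first crossing with strand $i$); and symmetrically for $k>j$. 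Thus every strand admits a nonempty set of times at which it can be kept outside the gap, and $(\star)$ is the assertion that the realization can be chosen so that these time-sets have a common instant.

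For the last step the plan would be to take a realization minimizing $d:=\min_t\big(\mathrm{pos}_t(j)-\mathrm{pos}_t(i)\big)$ and, assuming $d\geq 2$, to modify it within the set of positive braids with crossing matrix $A$ --- e.g.\ by the far commutations $\sigma_p\sigma_q=\sigma_q\sigma_p$ ($|p-q|\geq 2$) and the braid relations --- so as to slide the ``escapes from the gap'' of the (at most $N-2$) offending strands until they overlap at a single instant, while preventing any strand from outside $[i,j]$ from drifting in at that instant; this would contradict minimality unless $d=1$. The hard part will be precisely this \emph{coordination}: full support supplies the existence of an escape for each individual strand, but evacuating one obstructing strand generically requires crossings that can push a different obstructing strand back into the gap, or let in a strand from outside $[i,j]$. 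I have not found a global surgery that controls these interactions, nor an induction that does --- induction on $N$ would want a strand that can be held permanently outside the gap, which need not exist, while induction on $j-i$ fails because full support of $(i,j)$ does not descend to $(i+1,j)$ or $(i,j-1)$. A disproof, on the other hand, would be a positively realizable symmetric $A$ with a fully supported zero $a_{ij}$ for which every realization keeps some strand between strands $i$ and $j$ at \emph{all} times; the absence of such an example in computations up to size $7\times 7$ is our main evidence for the conjecture.
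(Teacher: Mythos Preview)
The statement you are attempting to prove is labeled a \emph{Conjecture} in the paper, and the authors explicitly say they ``find [it] plausible but have not succeeded in proving or disproving'' it; there is no proof in the paper to compare against. Your reduction to the geometric claim $(\star)$ is correct and is essentially the content the paper intends---the paper's own example with the three ``hooks'' (Figures~\ref{fig:MA} and~\ref{fig:MB}) is there precisely to illustrate that not every realization brings strands $i$ and $j$ adjacent, only \emph{some}, which is exactly the distinction your $(\star)$ captures. Your splicing argument (insert $\sigma_p\sigma_p$ at the moment of adjacency) correctly shows that $(\star)$ implies the conjecture.

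The gap you identify is genuine and is the whole difficulty: full support guarantees each intermediate strand has an individual escape interval from the gap, but you have no mechanism to synchronize these escapes across all obstructing strands simultaneously while keeping outside strands out. Your candidate tools---far commutations, braid relations, minimizing $d$---do not obviously achieve this, and you correctly note that neither induction on $N$ nor on $j-i$ has a clean inductive hypothesis available. So your proposal is not a proof; it is a correct reformulation together with an honest account of where the argument stalls, and that stall point coincides with where the paper itself stops.
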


We caution that the word ``some'' is necessary here: the tableau
\begin{equation*}
	\fourTableau{S}{0}{0}{S}{0}{S}\quad=
	\fourTableau{S}{0}{0}{0}{0}{0}\quad\oplus
	\fourTableau{0}{0}{0}{S}{0}{0}\quad\oplus
	\fourTableau{0}{0}{0}{0}{0}{S}
\end{equation*}
can be realized as a composition of three ``hooks'' in any order.  
If the middle one in the sum above occurs before or after 
both of the others (Figure~\ref{fig:MA}), strands $1$ and $4$ come into adjacent positions, 
\begin{figure}[p]
\begin{center}
\begin{tikzpicture}
	\braid[
	line width=2pt,
	style strands={1}{red},
	style strands={2}{blue},
	style strands={3}{green},
	style strands={4}{brown}
	]
	(braid) at (2,0) s_{1}-s_{3} s_{1}-s_{3} s_{2} s_{2} ;

	\node[at=(braid-1-s),pin=north : 1] {};
	\node[at=(braid-2-s),pin=north : 2] {};
	\node[at=(braid-3-s),pin=north : 3] {};
	\node[at=(braid-4-s),pin=north : 4] {};

	\node[at=(braid-1-e),pin=south : 1] {};
	\node[at=(braid-2-e),pin=south : 2] {};
	\node[at=(braid-3-e),pin=south : 3] {};
	\node[at=(braid-4-e),pin=south : 4] {};

\end{tikzpicture}
\caption{Middle hook after the others: strands 1 and 4 adjacent}
\label{fig:MA}
\end{center}
\end{figure}
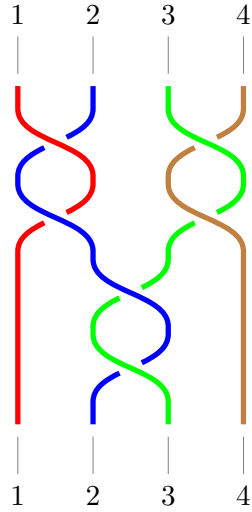 
but if it occurs between them (Figure~\ref{fig:MB}), then at any stage 
either strand 2 or strand 3 separates these two. 
\begin{figure}[p]
\begin{center}
\begin{tikzpicture}
	\braid[
	line width=2pt,
	style strands={1}{red},
	style strands={2}{blue},
	style strands={3}{green},
	style strands={4}{brown}
	]
	(braid) at (2,0) s_{1} s_{1} s_{2}  s_{2}  s_{3} s_{3} ;

	\node[at=(braid-1-s),pin=north : 1] {};
	\node[at=(braid-2-s),pin=north : 2] {};
	\node[at=(braid-3-s),pin=north : 3] {};
	\node[at=(braid-4-s),pin=north : 4] {};

	\node[at=(braid-1-e),pin=south : 1] {};
	\node[at=(braid-2-e),pin=south : 2] {};
	\node[at=(braid-3-e),pin=south : 3] {};
	\node[at=(braid-4-e),pin=south : 4] {};

\end{tikzpicture}
\caption{Middle between the others: strands 1 and 4 separated by strands 2 and 3}
\label{fig:MB}
\end{center}
\end{figure}
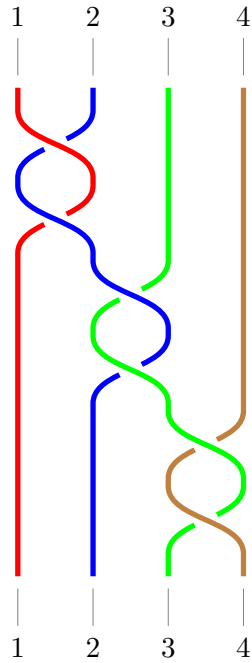 

Given this conjecture as a lemma, an inductive argument goes as follows: in any $T0$ matrix, there are nonzero entries which
do not support any other element (since all supports of an entry live in lower-numbered superdiagonals).  ``Erasing'' one such entry
yields a $T0$ matrix with a lower entry sum;  by induction on this sum, the latter is realizable, and hence by the conjectured 
lemma, so is our given matrix.

\bibliography{braids}
\bibliographystyle{amsplain}

\end{document}